\newcommand\cyr{%
\renewcommand\rmdefault{wncyr}%
\renewcommand\sfdefault{wncyss}%
\renewcommand\encodingdefault{OT2}%
\normalfont
\selectfont}
\DeclareTextFontCommand{\textcyr}{\cyr}
\renewcommand{\(}{\begin{equation}}
\renewcommand{\)}{\end{equation}}
\newcommand{\bea}{\begin{eqnarray}}
\newcommand{\eea}{\end{eqnarray}}
\theoremstyle{plain}
\newtheorem{theorem}{Theorem}[section]
\newtheorem{proposition}[theorem]{Proposition}
\theoremstyle{definition}
\newtheorem{example}[theorem]{Example}
\theoremstyle{theorem}
\newtheorem{definition}[theorem]{Definition}
\newtheorem{lemmaapp}{Lemma A.\!\!}
\theoremstyle{remark}
\newtheorem{remark}[theorem]{Remark}
\title{Central extensions of mapping class groups from characteristic classes}
\author{Domenico Fiorenza}
\address{Dipartimento di Matematica,``La Sapienza'' Universit\`a di Roma, P.le Aldo Moro, 5, Roma, Italy}
\author{Urs Schreiber}
\address{Charles University Institute of Mathematics, Sokolovsk\'a 83, 186 75 Praha 8, Czech Republic}
\author{Alessandro Valentino}
\address{Max Planck Institut f\"ur Mathematik, Vivatsgasse 7, 53113 Bonn, Germany}
\keywords{Mapping class groups; diffeomorphism groups; characteristic classes; higher categories}
\begin{document}

%
%
\begin{abstract}
We characterize, 
for every higher smooth stack equipped
with ``tangential structure'', the induced higher group extension of the
geometric realization of its higher automorphism stack. We show that when
restricted to smooth manifolds equipped with higher degree topological
structures, this produces higher extensions of homotopy types of diffeomorphism groups.
Passing to the groups of connected components, we obtain abelian
extensions of mapping class groups and we derive sufficient conditions
for these being central.  We show as a special case that this
provides an elegant re-construction of Segal's approach to $\mathbb{Z}$-extensions of mapping
class groups of surfaces that provides the anomaly cancellation of the modular functor in Chern-Simons
theory.  Our construction generalizes
Segal's approach to higher central extensions of mapping class
groups of higher dimensional manifolds with higher tangential
structures, expected to provide the analogous anomaly
cancellation for higher dimensional TQFTs.\end{abstract}
\maketitle
\begin{flushright}
\emph{``Everything in its right place''}\\
Kid A, Radiohead
\end{flushright}
\setcounter{tocdepth}{1}
\tableofcontents

\section{Introduction}

In higher (stacky) geometry, there is a general and fundamental class of higher (stacky) group extensions: for $\psi : Y \to B$ any morphism between higher stacks, the automorphism group stack of $Y$ over $B$
extends the automorphisms of $Y$ itself by the loop object of the mapping stack $[Y,B]$ based at $\psi$.
{
Schematically this extension is of the following form
$$
  \left\{
  \;\;
  \raisebox{20pt}{
  \xymatrix{
    Y
    \ar@/^1.4pc/[d]|\psi_<<{\ }="s"
    \ar@/_1.4pc/[d]|\psi^>>{\ }="t"
    \\
    B
    \ar@{=>} "s"; "t"
  }
  }
  \;\;
  \right\}
  \;\;
  \longrightarrow
  \;\;
  \left\{
  \raisebox{20pt}{
  \xymatrix{
    Y \ar[dr]|\psi^{\ }="t" \ar[rr]^-{\simeq}_{\ }="s"
     &&
    Y \ar[dl]|\psi
    \\
    & B
    \ar@{=>} "s"; "t"
  }
  }
  \right\}
  \;\;
  \longrightarrow
  \;\;
  \left\{
      Y  \xrightarrow{\phantom{m}\simeq\phantom{m}}
      Y
  \right\}
$$
but the point is that all three items here are themselves realized ``internally'' as higher group stacks.
}
This is not hard to prove \cite[prop. 3.6.16]{schreiber}, but as a general abstract fact it has many non-trivial incarnations.
{ Here we are concerned with a class of examples of these extensions for the case of smooth higher stacks,
i.e. higher stacks over the site of all smooth manifolds.}

In \cite{hgp} it was shown that for the choice that $B=\mathbf{B}^nU(1)_{\mathrm{conn}}$ is the universal moduli stack for { degree $n+1$} ordinary differential cohomology, then these extensions reproduce and generalize the Heisenberg-Kirillov-Kostant-Souriau-extension
from prequantum line bundles to higher ``prequantum gerbes'' which appear in the local (or ``extended'') geometric quantization
of higher dimensional field theories.

Here we consider a class of examples at the other extreme: we consider the
case in which { $Y$ is a smooth manifold (regarded as the stack that it presents),
but $B$ is geometrically discrete (i.e., it is a locally constant  $\infty$-stack),
and particularly the
case that $B$ is the homotopy type of the classifying space of the general linear group. }
{ This means that the slice automorphism group (the middle term above) becomes a smooth group stack
that extends the smooth diffeomorphism group of $Y$ (the item on the right above) by a locally constant higher group stack
(the item on the left)}.

{ We are interested in the homotopy type of this higher stacky extension of the diffeomorphism group,
that is in the geometric realization of the smooth slice group stack.}
{ In general, geometric realization of higher smooth group stacks will not preserve the above
extension, but here it does, due to the fact that $B$ is assumed to be geometrically discrete.
This resulting class of extensions is our main Theorem \ref{prop.extension} below. It uses that
geometric realization of smooth $\infty$-stacks happens to preserve homotopy
fibers over geometrically discrete objects \cite[thm. 3.8.19]{schreiber}.
Hence, where the internal extension theorem gives extensions of smooth diffeomorphism groups
by higher homotopy types, after geometric realization we obtain higher extensions of the homotopy type of
diffeomorphism groups, and in particular of mapping class groups.}

{ We emphasize that it is the interplay between smooth higher stacks and their geometric realization that
makes this work: one does not see diffeomorphism groups, nor their homotopy types, when forming the above
extension in the plain homotopy theory of topological spaces. So, even though the group extensions that we
study are geometrically discrete, they encode information about smooth diffeomorphism groups.}

A key application where extensions of the mapping class group traditionally play a role is anomaly cancellation
in 3-dimensional topological field theories, e.g., in 3d Chern-Simons theory, see, e.g., \cite{witten}.

Our general extension result reduces to a new and elegant construction of the anomaly cancellation
construction for modular functors in 3d Chern-Simons theory, and
naturally generalizes this to higher extensions relevant for higher dimensional topological quantum field theories (TQFTs).

More in detail, by functoriality, a 3d TQFT associates to any connected oriented surface $\Sigma$ a vector space $V_\Sigma$ which is
a linear representation of the oriented mapping class group $\Gamma^{\mathrm{or}}(\Sigma)$ of $\Sigma$. However, if the 3d theory has an ``anomaly'', then the vector space $V_\Sigma$ fails to be a genuine representation of $\Gamma^{\mathrm{or}}(\Sigma)$, and it rather is only a projective representation. One way to think of this phenomenon is to look at anomalous theories as relative theories, that intertwine between the trivial theory and an invertible theory, namely the anomaly. See, e.g. \cite{freed-teleman,fiorenza-valentino}. In particular, for an anomalous TQFT of the type obtained from modular tensor categories with nontrivial central charge \cite{turaev,bakalov-kirillov}, the vector space $V_\Sigma$ can be naturally realised as a genuine representation of a $\mathbb{Z}$-central extension
 \begin{equation}
 0\to \mathbb{Z}\to\widehat{\Gamma(\Sigma)}\to \Gamma(\Sigma)\to 1
 \end{equation}
of the mapping class group $\Gamma(\Sigma)$. As suggested in Segal's celebrated paper on conformal field theory \cite{SegalCFT}, these data admit an interpretation as a genuine functor where one replaces 2-dimensional and 3-dimensional manifolds by suitable ``enriched'' counterparts, in such a way that the automorphism group of an enriched connected surface is the relevant $\mathbb{Z}$-central extension of the mapping class group of the underlying surface. Moreover, the set of (equivalence classes of) extensions of a 3-manifold with prescribed (connected) boundary behaviour is naturally a $\mathbb{Z}$-torsor. In \cite{SegalCFT} the extension consists in a ``rigging'' of the 3-manifold, a solution which is not particularly simple, and which is actually quite ad hoc for the 3-dimensional case. Namely, riggings are based on the contractibility of Teichm\"uller spaces, and depend on the properties of the $\eta$-invariant for Riemannian metrics on 3-manifolds with boundary. On the other hand, in \cite{SegalCFT} it is suggested  that simpler variants of this construction should exist, the \emph{leitmotiv} being that of associating functorially to any connected surface a space with fundamental group $\mathbb{Z}$. Indeed, there is a well known realization of extended surfaces as surfaces equipped with a choice of a Lagrangian subspace in their first real cohomology group. This is the point of view adopted, e.g., in \cite{bakalov-kirillov}. The main problem with this approach is the question of how to define a corresponding notion for an extended 3-manifold.

In the present work we show how a natural way of defining enrichments of 2-and-3-manifolds, which are topological (or better homotopical) in nature, and in particular do not rely on special features of the dimensions 2 and 3. Moreover, they have the advantage of being immediately adapted to a general TQFT framework. Namely, we consider enriched manifolds as $(X,\xi)$-framed manifolds in the sense of \cite{lurieTFT}. In this way, we in particular recover the fact that the simple and natural notion of $p_1$-structure, i.e. a trivialization of the first Pontryagin class, provides a very simple realization of Segal's prescription by showing how it naturally drops out as a special case of the ``higher modularity'' encoded in the $(\infty,n)$-category of framed cobordisms. { This is { discussed in detail} in Section \ref{sect.p1structures} below.}\\
Finally, if one is interested in higher dimensional Chern-Simons theories, the notable next case being
7-dimensional Chern-Simons theory \cite{7dCS}, then the above discussion gives general means for determining
and constructing the relevant higher extensions of diffeomorphism groups of higher dimensional manifolds.\\
More on this is going to be discussed elsewhere.\\

The present paper is organised as follows.
In section \ref{section.framed} we discuss the ambient homotopy theory $\mathbf{H}^{\infty}$ of smooth higher stacks,
and we discuss how smooth manifolds and homotopy actions of $\infty$-groups can be naturally regarded as objects in its slice $\infty$-category over the homotopy type $\mathscr{B}GL(n;\mathbb{R})$ of the mapping stack $\mathbf{B}GL(n;\mathbb{R})$ of principal $GL(n;\mathbb{R})$-bundles. \\
In section \ref{rhoframed} we introduce the notion of a $\rho$-framing (or $\rho$-structure) over a smooth manifold, and study extensions of their automorphism $\infty$-group. We postpone the proof of the extension result to the Appendix.\\
In section \ref{section.lifting} we discuss the particular but important case of $\rho$-structures arising from the homotopy fibers of morphisms of $\infty$-stacks, which leads to Theorem \ref{prop.extension}, the main result of the present paper. In this section we also consider the case of manifolds with boundaries.\\
In section \ref{section.mapping}, we apply the abstract machinery developed in the previous sections to the concrete case of the mapping class group usually encountered in relation to topological quantum field theories.\\
The Appendix contains a proof of the extension result in section \ref{section.lifting}.\\

Throughout, we freely use the language of $\infty$-categories, as
developed in \cite{lurieHTT}. There are various equivalent models for these, such
as by simplicially enriched categories as well as by quasi-categories, but
since these are equivalent, we mostly do not specify the model, and
the reader is free to think of whichever model they prefer. \\

\noindent\textbf{Acknowledgements.} The authors would like to thank Oscar Randal-Williams and Chris Schommer-Pries for useful discussions.
%



\section{Framed manifolds}\label{section.framed}

\subsection{From framed cobordism to $(X,\xi)$-manifolds}
The principal player in the celebrated constructions of \cite{GalatiusMadsenTillmannWeiss06, GalatiusRandallWilliams10, lurieTFT} are manifolds with exotic ``tangential structure'' or ``framing''. These framings come in various flavours, from literal $n$-framings, i.e., trivialisations of the (stabilized) tangent bundle to more general and exotic framings called $(X,\xi)$-structures in \cite{lurieTFT}.
Here we make explicit that these structures are most naturally understood in the
slice of a suitable smooth $\infty$-topos over $\mathbf{H}^\infty$ over $\mathscr{B}GL(n;\mathbb{R})$
This will allow us not only to see Lurie's framings from a unified
perspective, but also to consider apparently more exotic (but actually completely natural) framings given by characteristic classes for the orthogonal group.
\subsubsection{Homotopies, homotopies, homotopies everywhere}
{
The $\infty$-topos of $\infty$-stacks over the site of all smooth manifolds, or equivalently just over the site of
Cartesian spaces among these, we denote by
$$
  \mathbf{H} := \mathrm{Sh}_\infty(\mathrm{SmthMfd}) \simeq \mathrm{Sh}_\infty(\mathrm{CartSp})
$$
\cite[def. 3.1.4]{fiorenza-schreiber-stasheff}.

This is a \emph{cohesive $\infty$-topos} \cite[prop. 4.4.8]{schreiber}, which in particular means (\cite[def. 3.4.1]{schreiber} following \cite{Lawvere}) that the
locally constant $\infty$-stack functor $\mathrm{LConst} : \infty\mathrm{Grp} \longrightarrow \mathbf{H}$
is fully faithful and
has a left adjoint ${\vert -\vert}$ that preserves products
$$
  ( {\vert-\vert} \,\dashv\, \mathrm{LConst} \,\dashv\, \Gamma )
    \;:\;
  \xymatrix{
    \mathbf{H}
      \ar@{->}@<+8pt>[rr]|<\times^-{\vert -\vert}
      \ar@{<-^{)}}@<+0pt>[rr]|{\mathrm{LConst}}
      \ar@<-8pt>[rr]_{\Gamma}
    &&
    \infty\mathrm{Grpd}
  }
  \,.
$$
This extra left adjoint ${\vert - \vert}$ is the operation of sending a smooth $\infty$-stack to its
topological geometric realization, thought of as an $\infty$-groupoid \cite[cor. 4.4.28]{schreiber},\cite[thm. 1.1]{Carchedi15}.
In particular a smooth manifold is sent to its homotopy type.

Notice that the hom-$\infty$-groupoids of any $\infty$-topos $\mathbf{H}$ may be expressed in terms of the internal hom (mapping $\infty$-stack)
construction $[-,-]$ as
$$
  \mathbf{H}(\Sigma_1,\Sigma_2) \simeq \Gamma( [\Sigma_1, \Sigma_2] )
  \,.
$$
But now since the left adjoint ${\vert-\vert}$ exists and preserves products, this means that there naturally exists an alternative $\infty$-category,
which we denote by $\mathbf{H}^\infty$,
with the same objects as $\mathbf{H}$, but with hom-$\infty$-groupoids defined by%
\footnote{
This construction of an $\infty$-category $\mathbf{H}^\infty$ from a cohesive $\infty$-topos
$\mathbf{H}$ is the direct $\infty$-category theoretic analog of what for cohesive 1-categories is called
their ``canonical extensive quality'' in \cite[thm. 1]{Lawvere}.
}
\begin{equation}\label{homotopyenrichment}
  \mathbf{H}^\infty(\Sigma_1,\Sigma_2):=\big\vert[\Sigma_1,\Sigma_2]\big\vert
\end{equation}
Accordingly, we write $\mathbf{Aut}^\infty(\Sigma)$ for the sub-$\infty$-groupod on the invertible elements in $\mathbf{H}^\infty(\Sigma,\Sigma)$.

\medskip

The reason we pass to $\mathbf{H}^\infty$ is that $\mathbf{H}$ itself is too rigid (or, in other words, the homotopy type of its hom-spaces is too simple) for our aims. For instance, given two smooth manifolds $\Sigma_1$ and $\Sigma_2$, the $\infty$-groupoid $\mathbf{H}(\Sigma_1,\Sigma_2)$ is 0-truncated, i.e., it is just a set. Namely,  $\mathbf{H}(\Sigma_1,\Sigma_2)$ is the set of smooth maps from $\Sigma_1$ and $\Sigma_2$ and there are no nontrivial morphisms between smooth maps in $\mathbf{H}(\Sigma_1,\Sigma_2)$. In other words, two smooth maps between  $\Sigma_1$ and $\Sigma_2$ either are equal or they are different:
in this hom-space there's no such thing as ``a smooth map can be smoothly deformed into another smooth map'',
which however is a kind of relation that geometry naturally suggests. To take it into account, we make the topology (or, even better, the smooth structure) of $\Sigma_1$ and $\Sigma_2$ come into play, and we use it to informally define $\mathbf{H}^\infty(\Sigma_1,\Sigma_2)$ as the $\infty$-groupoid whose objects are smooth maps between $\Sigma_1$ and $\Sigma_2$, much as for $\mathbf{H}(\Sigma_1,\Sigma_2)$, but whose 1-morphism are the smooth homotopies between smooth maps, and we also have 2-morphisms given by homotopies between homotopies, 3-morphisms given by homotopies  between homotopies between homotopies, and so on. {

}

Here is another example. For $G$ a Lie group, we will write $\mathbf{B}G$ for the smooth stack of principal $G$-bundles. This means that for $\Sigma$ a smooth manifold, a morphism $f\colon \Sigma\to \mathbf{B}G$ is precisely a $G$-principal bundle over $\Sigma$. So, in particular, $\mathbf{B}GL(n;\mathbb{R})$ is the smooth stack of principal $GL(n;\mathbb{R})$-bundles. Identifying a principal $GL(n;\mathbb{R})$-bundle with its associated rank $n$ real vector bundle,  $\mathbf{B}GL(n;\mathbb{R})$ is equivalently the smooth stack
of rank $n$ real vector bundles and their isomorphisms. In particular, a map $\Sigma\to \mathbf{B}GL(n;\mathbb{R})$ is precisely the datum of a rank $n$ vector bundle on the smooth manifold $\Sigma$. Again, for a given smooth manifold $\Sigma$, the homotopy type of $\mathbf{H}(\Sigma,\mathbf{B}G)$ is too rigid for our aims: the $\infty$-groupoid $\mathbf{H}(\Sigma,\mathbf{B}G)$ is actually a 1-groupoid. This means that we have objects, which are the principal $G$-bundles over $\Sigma$, and 1-morphism between these objects, which are isomorphisms of principal $G$-bundles, and then nothing else: we do not have nontrivial morphisms between the morphisms, and there's no such a thing like ``a morphism can be smoothly deformed into another morphism'', which again is something very natural to consider from a geometric point of view. Making the smooth structure of the group $G$ come into play we get the following description of
the $\infty$-groupoid $\mathbf{H}^\infty(\Sigma,\mathbf{B}G)$: its objects are the principal $G$-bundles over $\Sigma$ and its 1-morphism are the isomorphisms of principal $G$-bundles, much as for $\mathbf{H}(\Sigma,\mathbf{B}G)$, but then we have also 2-morphisms given by isotopies between isomorphisms, 3-morphisms given by isotopies between isotopies, and so on.
%
Notice that we have a canonical $\infty$-functor\footnote{{In terms of cohesion this is a component of the canonical
points-to-pieces-transform $\Gamma [\Sigma,\mathbf{B}G] \to [\Sigma,\mathbf{B}G] \to \vert[\Sigma,\mathbf{B}G]\vert$.}}
\begin{equation}
\mathbf{H}(\Sigma,\mathbf{B}G) \longrightarrow \mathbf{H}^\infty(\Sigma,\mathbf{B}G).
\end{equation}
This is nothing but saying that for $j\geq 2$, the $j$-morphisms in $\mathbf{H}(\Sigma,\mathbf{B}G)$ are indeed very special $j$-morphisms in $\mathbf{H}^\infty(\Sigma,\mathbf{B}G)$, namely the identities. Moreover, when $G$ happens to be a discrete group, this embedding is actually an equivalence of $\infty$-groupoids.


\subsection{Geometrically discrete $\infty$-stacks and the homotopy type $\mathscr{B}GL(n)$}
The following notion will be of great relevance for the results of this note. Recall from above the full inclusion
\begin{equation}
\mathrm{LConst}:\infty\mathrm{Grpd} \to \mathbf{H}
\end{equation}
given by regarding an $\infty$-groupoid $\mathcal{G}$ as a constant presheaf over Cartesian spaces. We will say that an object in $\mathbf{H}$ is a \emph{geometrically discrete} $\infty$-stack if it belongs to the essential image of $\mathrm{LConst}$. An example of a geometrically discrete object in $\mathbf{H}$ is given by the 1-stack $\mathbf{B}G$, with $G$ a discrete group. More generally, for $A$ an abelian discrete group the (higher) stacks $\mathbf{B}^n A$ of principal $A$-$n$-bundles are geometrically discrete. The importance of considering geometrically discrete $\infty$-stacks is that the {geometric realization functor $\vert-\vert$ introduced before is left adjoint to $\mathrm{LConst}$}. In particular{, denoting by $\Pi\colon \mathbf{H}\to \mathbf{H}$ the composition $\mathrm{LConst}\circ \vert-\vert$,} we have a canonical unit morphism
\begin{equation}
\mathrm{id}_{\mathbf{H}}\to \Pi
\end{equation}
which is the canonical morphism from a smooth stack to its homotopy type (and which corresponds to looking at points of a smooth manifold $\Sigma$ as constant paths into $\Sigma$). In particular, for $G$ a group, we will write $\mathscr{B}G$ for the homotopy type of $\mathbf{B}G$, i.e., we set $\mathscr{B}G:=\Pi\mathbf{B}G$.
  This {precisely encodes}
  the traditional classifying space $BG$ for the group $G$
(or rather of its principal bundles) {within $\mathbf{H}^\infty$}. { Namely, for $\Sigma$ a smooth manifold we have, by the very definition of adjunction
\[
\mathbf{H}(\Sigma, \mathscr{B}G) =\infty\mathrm{Grpd}(|\Sigma|,|\mathbf{B}G|).
\]
A model for the classifying space $BG$ is precisely given by the topological realization of $\mathbf{B}G$, while $|\Sigma|$ is nothing but the topological space underlying the smooth manifold $\Sigma$ (so that by a little abuse of notation, we will simply write $\Sigma$ for $\Sigma$).
Moreover, since by definition  $\mathscr{B}G$ is geometrically discrete we also have  $\mathbf{H}^{\infty}(\Sigma, \mathscr{B}G) \cong \mathbf{H}(\Sigma, \mathscr{B}G)$, so that in the end we have a natural equivalence
\[
\mathbf{H}^{\infty}(\Sigma, \mathscr{B}G) = \infty\mathrm{Grpd}(\Sigma,BG).
\]
Under the equivalence between (nice) topological spaces and $\infty$-groupoids, on the right we have the $\infty$-groupoid of \emph{continuous} maps from $\Sigma$ to the classifying space $BG$. Notice how this example precisely shows how $\mathbf{H}^\infty$ is a setting where we can talk on the same footing of smooth and continuous phenomena. For instance, smooth maps from a smooth manifold $\Sigma$ to another smooth manifold $M$ and their smooth homotopies are encoded into $\mathbf{H}^{\infty}(\Sigma, M)$, while continuous maps between $\Sigma$ and $M$ and their continuous homotopies  are encoded into $\mathbf{H}^{\infty}(\Sigma, \Pi(M))$.}

The unit $\mathrm{id}_{\mathbf{H}}\to \Pi$ gives a canonical morphism
\begin{equation}
\mathbf{B}G\to \mathscr{B}G,
\end{equation}
which is an equivalence for a discrete group $G$.
This tells us in particular  that any object over $\mathbf{B}G$ is naturally also an object over $\mathscr{B}G$.  For instance (and this example will be the most relevant for what follows), a choice of a rank $n$ vector bundle over a smooth manifold $\Sigma$ realises $\Sigma$ as an object over $\mathscr{B}GL(n;\mathbb{R})$.\\
{Notice how we have a canonical morphism
\begin{equation}
\mathbf{H}(\Sigma,\mathbf{B}G) \longrightarrow  \mathbf{H}^\infty(\Sigma,\mathscr{B}G)
\end{equation}
}
obtained by  composing the canonical morphism $\mathbf{H}(\Sigma,\mathbf{B}G) \to \mathbf{H}^\infty(\Sigma,\mathbf{B}G)$ mentioned in the previous section with the push forward morphism $\mathbf{H}^\infty(\Sigma,\mathbf{B}G) \to \mathbf{H}^\infty(\Sigma,\mathscr{B}G)$,
The main reason to focus on geometrically discrete stacks is that, { though $\vert-\vert$ preserves finite products, it does \emph{not} in general preserve homotopy pullbacks. Neverthless, $\vert-\vert$ does indeed preserve homotopy pullbacks of diagrams
whose tip is a geometrically discrete object in $\mathbf{H}$ \cite[thm. 3.8.19]{schreiber}.}

\subsubsection{Working in the slice}
Let now $n$ be a fixed nonnegative integer and let $0\leq k\leq n$. Any $k$-dimensional smooth manifold $M_k$ comes canonically equipped with a rank $n$ real vector bundle given by the stabilized tangent bundle $T^{\mathrm{st}}M_k=TM_k\oplus \underline{\mathbb{R}}^{n-k}_{M_k}$, where $\underline{\mathbb{R}}^{n-k}_{M_k}$ denotes the trivial rank $(n-k)$ real vector bundle over $M_{k}$. We can think of the stabilised tangent bundle\footnote{To be precise, $T^{\mathrm{st}}$ is the map of stacks induced by the frame bundle of the stabilised tangent bundle to $M_{k}$.} as a morphism
\begin{equation}
M_{k}\xrightarrow{T^{\mathrm{st}}} \mathscr{B}GL(n) 
\end{equation}
where $GL(n)$, as in the following, denotes $GL(n;\mathbb{R})$.\\
Namely, we can regard any smooth manifold of dimension at most $n$ as an object \emph{over} $\mathscr{B}GL(n)$. This suggests that a natural setting to work in is the slice topos $\mathbf{H}^\infty_{/\mathscr{B}GL(n)}$, which in the following we will refer to simply as ``the slice'':
 in other words, all objects involved will be equipped with morphisms to $\mathscr{B}GL(n)$, and a morphism between $X\xrightarrow{\varphi} \mathscr{B}GL(n)$ and $Y\xrightarrow{\psi} \mathscr{B}GL(n)$ will be a homotopy commutative diagram
\begin{equation}
\xymatrix{
X\ar[dr]_{\varphi}\ar[rr]^{f}&&Y\ar[dl]^{\psi}\\
&\mathscr{B}GL(n)
 \ar@{=>}(20.25,-3.25);(16.75,-6.75)^{\eta}
}.
\end{equation}
More explicitly, if we denote by $E_\varphi$ and $E_\psi$ the rank $n$ real vector bundles over $X$ and $Y$ corresponding to the morphisms $\varphi$ and $\psi$, respectively, then we see that a morphism in the slice between $X\xrightarrow{\varphi} \mathscr{B}GL(n)$ and $Y\xrightarrow{\psi} \mathscr{B}GL(n)$ is precisely the datum of a morphism $f\colon X\to Y$ together with an \emph{isomorphism} of vector bundles over $X$,
\begin{equation}
\eta:f^*E_\psi \xrightarrow{\simeq} E_\varphi.
\end{equation}
Notice that these are precisely the same objects and morphisms as if we were working in the slice over $\mathbf{B}GL(n)$ in $\mathbf{H}$. Neverthless, as we will see in the following sections, where the use of $\mathbf{H}^\infty$ makes a difference is precisely in allowing nontrivial higher morphisms. Also, the use of the homotopy type $\mathscr{B}GL(n)$ in place of the smooth stack $\mathbf{B}GL(n)$ will allow us to make all constructions work ``up to homotopy'', and to identify, for instance, $\mathscr{B}GL(n)$ with $\mathscr{B}O(n)$.
\begin{example}
The inclusion of the trivial group into $GL(n)$ induces a natural morphism $*\to \mathscr{B}GL(n)$, corresponding to the choice of the trivial bundle. If $M_k$ is a $k$-dimensional manifold, then a morphism
\begin{equation}
\xymatrix{
M_k\ar[dr]_{T^{\mathrm{st}}}\ar[rr]&&{*}\ar[dl]\\
&\mathscr{B}GL(n)
 \ar@{=>}(20.25,-3.25);(16.75,-6.75)^{\eta}
}
\end{equation}
is precisely a trivialisation of the stabilised tangent bundle of $M_k$, i.e., an $n$-framing of $M$.
\end{example}
\begin{example}
Let $X$ be a smooth manifold, and let $\zeta$ be a rank $n$ real vector bundle over $X$, which we can think of as a morphism $\rho^{}_\zeta\colon X\to \mathscr{B}GL(n)$. Then a morphism
\begin{equation}
\xymatrix{
M_k\ar[dr]_{T^{\mathrm{st}}}\ar[rr]^{f}&&X\ar[dl]^{\rho^{}_\zeta}\\
&\mathscr{B}GL(n)
 \ar@{=>}(20.25,-3.25);(16.75,-6.75)^{\eta}
}
\end{equation}
is precisely the datum of a smooth map $f\colon M_k\to X$ and of an isomorphism $\eta\colon f^*\zeta\to TM\oplus \underline{\mathbb{R}}^{n-k}_{M_k}$. These are the data endowing $M_k$ with a $(X,\zeta)$-structure in the terminology of \cite{lurieTFT}.
\end{example}
The examples above suggest to allow $X$ to be not only a smooth manifold, but a smooth $\infty$-stack.
While choosing such a general target $(X,\zeta)$ could at first seem like a major abstraction, this is actually what one commonly encounters in everyday mathematics.
For instance a lift through $\mathbf{B}O(n)\to \mathbf{B}GL(n)$ is precisely a ($n$-stable) Riemannian structure.
Generally, for $G \hookrightarrow GL(n)$ any inclusion of Lie groups, or even more generally for $G \to GL(n)$ any morphism of Lie groups, then a lift through
$\mathbf{B}G \to  \mathbf{B}GL(n)$ is a ($n$-stable) \emph{$G$-structure}, e.g., an almost symplectic structure,
an almost complex structure, etc. (one may also phrase integrable $G$-structures in terms of slicing, using more
of the axioms of cohesion than we need here).
For instance, the inclusion of the connected component of the identity $GL^+(n)\hookrightarrow GL(n)$ corresponds to a morphism of higher stacks $\iota\colon \mathbf{B}GL^+(n)\to \mathbf{B}GL(n)$, and a morphism in the slice from $(M_k,T^{\mathrm{st}})$ to $(\mathbf{B}GL^+(n),\iota)$ is precisely the choice of a (stabilised) orientation on $M_k$.
For $G$ a higher connected cover of $O(n)$ then lifts through
 $\mathbf{B}G \to \mathbf{B}O(n)\to \mathbf{B}GL(n)$
 correspond to spin structures, string structures, etc.\\
  On the other hand, since $\mathscr{B}O(n) \to \mathscr{B}GL(n)$ is an equivalence, a lift through  $\mathscr{B}O(n) \to \mathscr{B}GL(n)$ is no additional structure on a smooth manifold $M_k$, and the stabilized tangent bundle of $M_k$ can be equally seen as a morphism to $\mathscr{B}O(n)$. Similarly, for $G \to GL(n)$ any morphism of Lie groups, lifts of $T^\mathrm{st}$ through $\mathscr{B}G \to \mathscr{B}GL(n)$ correspond to ($n$-stable) \emph{topological} $G$-structures.

\subsection{From homotopy group actions to objects in the slice}
We will mainly be interested in objects of $\mathbf{H}^\infty_{/\mathscr{B}GL(n)}$ obtained as a homotopy group action of a smooth (higher) group $G$ on some stack $X$, when $G$ is equipped with a $\infty$-group morphism to $GL(n)$. We consider then the following
\begin{definition}
{A homotopy action of a smooth $\infty$-group $G$ on $X$ is the datum of a smooth $\infty$-stack $X/\!/_hG$ together with
a homotopy pullback diagram
\begin{equation}\label{Gaction}
\xymatrix{
X\ar[d]\ar[r]&X/\!/_hG\ar[d]_{\rho}\\
{*}\ar[r] &\mathscr{B}G
}
\end{equation}
}
\end{definition}
\noindent
Unwinding the definition, one sees that a homotopy action of $G$ is nothing but an action of the homotopy type of $G$ and that $X/\!/_hG$ is realised as the stack quotient {$X/\!/\Pi G$}. See \cite{NSSa} for details. Since $G$ is equipped with a smooth group morphism to $GL(n)$, and since this induces a morphism of smooth stacks $\mathscr{B}G\to \mathscr{B}GL(n)$, the stack $X/\!/_hG$ is naturally an object over $\mathscr{B}GL(n)$. In particular, when $X$ is a deloopable object, i.e., when there exists a stack $Y$ such that $\Omega Y\cong X$, then one obtains a homotopy $G$-action out of any morphism $c\colon \mathscr{B}G\to Y$. Indeed, in this situation one can define $X/\!/_hG\to \mathscr{B}G$ by the homotopy pullback
\begin{equation}
\xymatrix{
X/\!/_hG\ar[d]_{\rho_c}\ar[r]&{*}\ar[d]\\
\mathscr{B}G\ar[r]^{c} &Y
}
\end{equation}
By using the pasting law for homotopy pullbacks, we can see that $X$, $X/\!/_hG$, and the morphism $\rho_{c}$ fit in a homotopy pullback diagram as in (\ref{Gaction}).
\begin{example}\label{charclass}
Let $c$ be a degree $d+1$ characteristic class for the group $SO(n)$. Then $c$ can be seen as the datum of a morphism of stacks $c\colon \mathscr{B}SO(n)\to \mathscr{B}^{d+1}\mathbb{Z}\cong \mathbf{B}^{d+1}\mathbb{Z}$, where $\mathbf{B}^{d+1}\mathbb{Z}$ is the smooth stack associated by the Dold-Kan correspondence to the chain complex with $\mathbb{Z}$ concentrated in degree $d+1$, i.e., the stack (homotopically) representing degree $d+1$ integral cohomology. Notice how the discreteness of the abelian group $\mathbb{Z}$ came into play to give the equivalence $\mathscr{B}^{d+1}\mathbb{Z}\cong \mathbf{B}^{d+1}\mathbb{Z}$. Since we have $\Omega \mathbf{B}^{d+1}\mathbb{Z}\cong \mathbf{B}^d\mathbb{Z}$, the characteristic class $c$ defines a homotopy action
\begin{equation}
\rho_c\colon \mathbf{B}^d\mathbb{Z}/\!/_hSO(n) \to \mathscr{B}SO(n)
\end{equation}
and so 
 it  realises $\mathbf{B}^d\mathbb{Z}/\!/_hSO(n)$ as an object in the slice $\mathbf{H}^\infty_{/\mathscr{B}GL(n)}$. For instance, the first Pontryagin class $p_1$ induces a homotopy action
\begin{equation}
\rho_{p_1}\colon \mathbf{B}^3\mathbb{Z}/\!/_hSO(n) \to \mathscr{B}SO(n).
\end{equation}
\end{example}
\section{$\rho$-framed manifolds and their automorphisms $\infty$-group}\label{rhoframed}
We can now introduce the main definition in the present work.
\begin{definition}
Let $M$ be a $k$-dimensional manifold, and let $\rho\colon X\to  \mathscr{B}GL(n)$ be a morphisms of smooth $\infty$-stacks, with $k\leq n$. Then a \emph{$\rho$-framing} (or \emph{$\rho$-structure}) on $M$ is a lift of the stabilised tangent bundle seen as a morphism $T^{\mathrm{st}}\colon M\to \mathscr{B}GL(n)$ to a morphism $\sigma\colon M\to X$, namely a homotopy commutative diagram of the form
\begin{equation}
\xymatrix{
M\ar[dr]_{T^{\mathrm{st}}}\ar[rr]^{\sigma}&&X\ar[dl]^{\rho}\\
&\mathscr{B}GL(n)
 \ar@{=>}(20.25,-3.25);(16.75,-6.75)^{\eta}
}
\end{equation}
\end{definition}
By abuse of notation, we will often say that the morphism $\sigma$ is the $\rho$-framing, omitting the explicit reference to the homotopy $\eta$, which is, however, always part of the data of a $\rho$-framing.\\
Since the morphism $\rho\colon X\to \mathscr{B}GL(n)$ is an object in the slice $\mathbf{H}^\infty_{/_{\mathscr{B}GL(n)}}$, we can consider the slice over $\rho$:  $(\mathbf{H}^\infty_{/_{\mathscr{B}GL(n)}})_{/_\rho}$. Although this double slice may seem insanely abstract at first, it is something very natural. Its objects are homotopy commutative diagrams, namely 2-simplices
\begin{equation}
\xymatrix{
Y\ar[dr]_{\tilde{\rho}}\ar[rr]^{a}&&X\ar[dl]^{\rho}\\
&\mathscr{B}GL(n)
 \ar@{=>}(20.25,-3.25);(16.75,-6.75)^{\eta}
}
\end{equation}
while its morphisms are homotopy commutative 3-simplices
\begin{equation}
\begin{xy}
0;/r.34pc/:
,(0,0)*{Y}
,(10,-5)*{Z}
,(30,3)*{X}
,(12,-20)*{\mathscr{B}GL(n)}
,(1.5,-1);(8.5,-4.2)**\dir{-}?>* \dir{>}
,(12,-4);(28.3,1.5)**\dir{-}?>* \dir{>}
,(1.5,0);(28.1,2.5)**\dir{-}?>* \dir{>}
,(29.5,1);(13,-17.5)**\dir{-}?>* \dir{>}
,(1,-2);(11,-17.5)**\dir{-}?>* \dir{>}
,(10,-7);(12,-17.2)**\dir{-}?>* \dir{>}
,(22.5,-9)*{\scriptstyle{\rho}}
,(5,-11)*{\scriptstyle{\tilde{\rho}}}
,(9,-9)*{\scriptstyle{\hat{\rho}}}
,(14.5,2.5)*{\scriptstyle{a}}
,(17,-4)*{\scriptstyle{b}}
,(6.5,-1.2)*{\scriptstyle{f}}
\end{xy}
\end{equation}
where for readability we have omitted the homotopies decorating the faces and the interior of the 3-simplex, and similarly, additional data must be provided for higher morphisms.\\
In particular we see that a $\rho$-framing $\sigma$ on $M$ is naturally an object in the double slice $(\mathbf{H}_{/_{\mathscr{B}GL(n)}})_{/_\rho}$. Moreover, the collection of all $k$-dimensional $\rho$-framed manifolds has a natural $\infty$-groupoid structure which is compatible with the forgetting of the framing, and with the fact that any $\rho$-framed manifold is in particular an object in the double slice $(\mathbf{H}^\infty_{/_{\mathscr{B}GL(n)}})_{/_\rho}$. More precisely, let $\mathscr{M}_k$ denote the $\infty$-groupoid whose objects are $k$-dimensional smooth manifolds, whose 1-morphisms are diffeomorphisms of $k$-dimensional manifolds whose 2-morphisms are isotopies of diffeomorphisms, and so on\footnote{The $\infty$-groupoid $\mathscr{M}_{k}$ can be rigorously defined as  $\Omega(\rm{Cob}_{t}(k))$, where $\rm{Cob}_{t}(k)$ is the $(\infty,1)$-category defined in \cite{lurieTFT} in the context of topological field theory.}. There is then an $\infty$-groupoid $\mathscr{M}_k^\rho$ of $\rho$-framed $k$-dimensional manifolds which is a $\infty$-subcategory of $(\mathbf{H}^\infty_{/_{\mathscr{B}GL(n)}})_{/_\rho}$, and comes equipped with a forgetful $\infty$-functor
\begin{equation}
\mathscr{M}_k^\rho\to \mathscr{M}_k.
\end{equation}
Namely, since the differential of a diffeomorphism between $k$-dimensional manifolds $M$ and $N$ can naturally be seen as an invertible 1-morphism between $M$ and $N$ as objects over $\mathbf{B}GL(n)$, we have a natural (not full) embedding
\begin{equation}
\mathscr{M}_k\hookrightarrow \mathbf{H}^\infty_{/\mathscr{B}GL(n)} .
\end{equation}
Consider then the forgetful functor
\begin{equation}
(\mathbf{H}^\infty_{/_{\mathscr{B}GL(n)}})_{/_\rho}\to \mathbf{H}^\infty_{/\mathscr{B}GL(n)}
\end{equation}
We have then the following important
\begin{definition}
Let $\rho\colon X\to \mathscr{B}GL(n)$ be an object in $\mathbf{H}^\infty_{/_{\mathscr{B}GL(n)}}$.
The $\infty$-groupoid $\mathscr{M}_k^\rho$ is then defined as the homotopy pullback diagram
\begin{equation}
\xymatrix{
\mathscr{M}_k^\rho\ar[r]\ar[d]& (\mathbf{H}^\infty_{/_{\mathscr{B}GL(n)}})_{/_\rho}\ar[d]\\
\mathscr{M}_k\ar[r]&\mathbf{H}^\infty_{/\mathscr{B}GL(n)}
}
\end{equation}
\end{definition}
Given two $\rho$-framed $k$-dimensional manifolds $(M,\sigma,\eta)$ and $(N,\tau,\vartheta)$, the $\infty$-groupoid $\mathscr{M}_k^\rho((M,\sigma,\eta),(N,\tau,\vartheta))$ is the homotopy pullback
\begin{equation}
\xymatrix{
\mathscr{M}_k^\rho((M,\sigma,\eta),(N,\tau,\vartheta))\ar[r]\ar[d]& (\mathbf{H}^\infty_{/_{\mathscr{B}GL(n)}})_{/_\rho}(\sigma,\tau)\ar[d]\\
\mathscr{M}_k(M,N)\ar[r]&\mathbf{H}^\infty_{/\mathscr{B}GL(n)}(T^{\mathrm{st}}_M,T^{\mathrm{st}}_N)
}
\label{DefiningHomotopyPullback}
\end{equation}
In particular, if we denote with $\mathrm{Diff}(M)$ the $\infty$-groupoid of diffeomorphisms of $M$, namely the automorphism $\infty$-group of $M$ as an object in $\mathscr{M}_k$, and we accordingly write $\mathrm{Diff}^\rho(M,\sigma)$ for the automorphisms $\infty$-group of $(M,\sigma)$ as an object in $\mathscr{M}^\rho_k$, then we have a homotopy pullback
\begin{equation}\label{def.diff}
    \xymatrix{
      \mathrm{Diff}^\rho(M,\sigma,\eta)
      \ar[r]
      \ar[d]
      &
      \mathbf{Aut}^\infty_{/\rho}(\sigma)
      \ar[d]
      \\
      \mathrm{Diff}(M)
      \ar[r]
      &
      \mathbf{Aut}^\infty_{/\mathscr{B}GL(n)}(T^{\mathrm{st}}_M)
    }
 \end{equation}
 where $\mathbf{Aut}_{(-)}^\infty(-)$ denotes the homotopy type of the relevant $\mathbf{H}$-internal automorphisms $\infty$-group. In particular, to abbreviate the notation, we will denote with $\mathbf{Aut}^\infty_{\rho}(\sigma)$ the automorphism $\infty$-group of $\sigma$ in $(\mathbf{H}^\infty_{/_{\mathscr{B}GL(n)}})_{/_\rho}$.\\
More explicitly, an element in $\mathrm{Diff}^\rho(M,\sigma,\eta)$ is a diffeomorphism $\varphi\colon M\to M$ together with an isomorphism $\alpha\colon \varphi^*\sigma\xrightarrow{\simeq}\sigma$, and a filler $\beta$ for the 3-simplex
\begin{equation}
\begin{xy}
0;/r.34pc/:
,(-5,0)*{M}
,(10,-5)*{M}
,(30,3)*{X}
,(12,-20)*{\mathscr{B}GL(n)}
,(-3,-1);(8.5,-4.2)**\dir{-}?>* \dir{>}
,(12,-4);(28.3,1.5)**\dir{-}?>* \dir{>}
,(-3,0);(28.1,2.5)**\dir{-}?>* \dir{>}
,(29.5,1);(13,-17.5)**\dir{-}?>* \dir{>}
,(-3,-2);(11,-17.5)**\dir{-}?>* \dir{>}
,(10,-7);(12,-17.2)**\dir{-}?>* \dir{>}
,(22.5,-9)*{\scriptstyle{\rho}}
,(1,-11)*{\scriptstyle{T^{st}}}
,(13,-9)*{\scriptstyle{T^{st}}}
,(5,-6)*{\scriptstyle{d\varphi}}
,(5,-8)*{\rotatebox[origin=c]{240}{$\Rightarrow$}}
,(14.5,2.5)*{\scriptstyle{\sigma}},
(13.0,-0.0)*{\scriptstyle{\alpha}},
(14.0,-1.5)*{\rotatebox[origin=c]{50}{$\Rightarrow$}},
,(17,-4)*{\scriptstyle{\sigma}}
,(18,-6)*{\scriptstyle{\eta}}
,(18,-8)*{\rotatebox[origin=c]{240}{$\Rightarrow$}}
,(6.5,-1.2)*{\scriptstyle{\varphi}}
\end{xy}
\end{equation}
\subsection{Functoriality and homotopy invariance of $\mathscr{M}_k^\rho$} In this section we will explore some of the properties of $\mathscr{M}_k^\rho$, which will be useful in the following.\\
It immediately follows from the definition that the forgetful functor $\mathscr{M}_k^\rho\to \mathscr{M}_k$ is a equivalence for $\rho\colon X\to \mathscr{B}GL(n)$ an equivalence in $\mathbf{H}^{\infty}(X, \mathscr{B}GL(n))$. In particular, if $\rho$ is the identity morphism of $\mathscr{B}GL(n)$ and we write $\mathscr{M}_k^{GL(n)}$ for $\mathscr{M}_k^{\mathrm{id}_{\mathscr{B}GL(n)}}$ then we have $\mathscr{M}_k^{GL(n)}\cong \mathscr{M}_k$. Less trivially, if $X=\mathscr{B}O(n)$, and $\rho$ is the natural morphism
\begin{equation}
\iota_{O(n)}\colon \mathscr{B}O(n)\to \mathscr{B}GL(n)
\end{equation}
 induced by the inclusion of $O(n)$ in $GL(n)$, then $\rho$ is again an equivalence, and we get $\mathscr{M}_k^{O(n)}\cong \mathscr{M}_k$, where we have denoted $\mathscr{M}_k^{\iota_{O(n)}}$ with $\mathscr{M}_k^{O(n)}$.\\
More generally, if $\rho$ and $\tilde{\rho}$ are equivalent objects in the slice $\mathbf{H}^{\infty}_{/\mathscr{B}GL(n)}$, then we have equivalent $\infty$-groupoids $\mathscr{M}_k^\rho$ and $\mathscr{M}_k^{\tilde{\rho}}$. For instance, the inclusion of $SO(n)$ into $GL(n)^+$ induces an equivalence between $\mathscr{B}SO(n)$ and $\mathscr{B}GL(n)^+$ over $\mathscr{B}GL(n)$, and so we have a natural equivalence $\mathscr{M}_k^{SO(n)}\cong \mathscr{M}_k^{GL(n)^+}$. Since the objects in the $\infty$-groupoid $\mathscr{M}_k^{GL(n)^+}$ are $k$-dimensional manifolds whose stabilised tangent bundle is equipped with a lift to an $SO(n)$-bundle, the objects of $\mathscr{M}_k^{GL(n)^+}$ are oriented $k$-manifolds. Moreover the pullback defining $\mathscr{M}_k^{GL(n)^+}$ precisely picks up oriented diffeomorphisms, hence the forgetful morphism $\mathscr{M}_k^{GL(n)^+}\to\mathscr{M}_k$ induces an equivalence between $\mathscr{M}_k^{GL(n)^+}$ and the $\infty$-groupoid $\mathscr{M}_k^{\mathrm{or}}$ of oriented $k$-dimensional manifolds with orientation preserving diffeomorphisms between them. As a consequence, one has a natural equivalence
\begin{equation}
\mathscr{M}_k^{SO(n)}\cong\mathscr{M}_k^{\mathrm{or}}
\end{equation}
Let $\psi\colon \rho\to \tilde{\rho}$ be a morphism in the slice $\mathbf{H}^{\infty}_{/\mathscr{B}GL(n)}$ between $\rho\colon X\to\mathscr{B}GL(n)$ and $\tilde{\rho}\colon Y\to \mathscr{B}GL(n)$. Then one has an induced push-forward morphism
\begin{equation}
\psi_*\colon \mathscr{M}_k^\rho\to \mathscr{M}_k^{\tilde{\rho}},
\end{equation}
which (by (\ref{DefiningHomotopyPullback}), and using the pasting law) fits into the homotopy pullback diagram
\begin{equation}
\xymatrix{
\mathscr{M}_k^\rho\ar[r]\ar[d]_{\psi_*}& (\mathbf{H}^\infty_{/_{\mathscr{B}GL(n)}})_{/_\rho}\ar[d]^{\Psi_*}\\
\mathscr{M}_k^{\tilde{\rho}}\ar[r]&(\mathbf{H}^\infty_{/_{\mathscr{B}GL(n)}})_{/_ {\tilde{\rho}}
}}
\end{equation}
where $\Psi_*$ denotes the base changing $\infty$-functor on the slice topos.\\
The homotopy equivalences illustrated above are particular cases of this functoriality: indeed, when $\psi$ is invertible, then $\psi_*$ is invertible as well (up to coherent homotopies, clearly).\\
Recall from Example \ref{charclass} that for any characteristic class $c$ of $SO(n)$ we obtain an object $\rho_{c}$ in the slice $\mathbf{H}^\infty_{/\mathscr{B}GL(n)}$.
In this way we obtain natural morphisms $\mathscr{M}_k^{\rho_{c}}\to \mathscr{M}_k^{SO(n)}$ . In particular, by considering the first Pontryagin class $p_1\colon \mathscr{B}SO(n)\to \mathbf{B}^4\mathbb{Z}$, we obtain a canonical morphism
\begin{equation}
 \mathscr{M}_k^{\rho_{p_1}}\to \mathscr{M}_k^{\mathrm{or}}.
\end{equation}
\subsection{Extensions of $\rho$-diffeomorphism groups}\label{section.extensions}
We are now ready for the extension theorem, which is the main result of this note. Not to break the flow of the exposition, we will postpone the details of the proof to the Appendix.
\\
Let
\begin{equation}
\xymatrix{
X\ar[rd]_{\rho}\ar[rr]^\psi && Y\ar[dl]^{\tilde{\rho}}\\
&\mathscr{B}GL(n)
 \ar@{=>}(20.25,-3.25);(16.75,-6.75)^{\Psi}}
\end{equation}
be a morphism in the slice over $\mathscr{B}GL(n)$, as at the end of the previous section, and let
\begin{equation}
\xymatrix{
M\ar[rd]_{T^{\mathrm{st}}_M}\ar[rr]^\tau && Y\ar[dl]^{\tilde{\rho}}\\
&\mathscr{B}GL(n)
 \ar@{=>}(20.25,-3.25);(16.75,-6.75)^{T}
 }
\end{equation}
be a $\tilde{\rho}$-structure on $M$.
Then, arguing as in Section \ref{rhoframed}, associated to any lift
\begin{equation}
\begin{xy}
0;/r.34pc/:
,(-5,0)*{M}
,(10,-5)*{Y}
,(30,3)*{X}
,(12,-20)*{\mathscr{B}GL(n)}
,(-3,-1);(8.5,-4.2)**\dir{-}?>* \dir{>}
,(28.3,1.5);(12,-4)**\dir{-}?>* \dir{>}
,(-3,0);(28.1,2.5)**\dir{-}?>* \dir{>}
,(29.5,1);(13,-17.5)**\dir{-}?>* \dir{>}
,(-3,-2);(11,-17.5)**\dir{-}?>* \dir{>}
,(10,-7);(12,-17.2)**\dir{-}?>* \dir{>}
,(22.5,-9)*{\scriptstyle{\rho}}
,(1,-11)*{\scriptstyle{T^{st}}}
,(13,-9)*{\scriptstyle{\tilde{\rho}}}
,(5,-6)*{\scriptstyle{T}}
,(5,-8)*{\rotatebox[origin=c]{240}{$\Rightarrow$}}
,(14.5,2.5)*{\scriptstyle{\sigma}},
(13.0,-0.0)*{\scriptstyle{\alpha}},
(14.0,-1.5)*{\rotatebox[origin=c]{195}{$\Rightarrow$}},
,(17,-4)*{\scriptstyle{\psi}}
,(18,-6)*{\scriptstyle{\Psi}}
,(18,-8)*{\rotatebox[origin=c]{-5}{$\Rightarrow$}}
,(5.5,-2.1)*{\scriptstyle{\tau}}
\end{xy}
\end{equation}
(where we are not displaying the label $\Sigma$ on the back face, nor the filler $\beta$ of the 3-simplex)
of $T$ to a $\rho$-structure $\Sigma$ on $M$, we have a homotopy pullback diagram
\begin{equation}
    \xymatrix{
      \mathrm{Diff}^\rho(M,\Sigma)
      \ar[r]
      \ar[d]_{\psi_*}
      &
      \mathbf{Aut}^{\infty}_{/\rho}(\Sigma)
      \ar[d]^{\psi_*}
      \\
      \mathrm{Diff}^{\tilde\rho}(M,T)
      \ar[r]
      &
      \mathbf{Aut}^{\infty}_{/\tilde{\rho}}(T)
    }
 \end{equation}
By the pasting law for homotopy pullbacks and from the pasting of homotopy pullback diagrams we have the following homotopy diagram (see Appendix for the proof)
\begin{equation}\label{diag1}
    \xymatrix{
     \Omega_{\beta}(\mathbf{H}^\infty_{/\mathscr{B}GL(n)})_{/\tilde{\rho}}(T,\Psi)\ar[r] \ar[d]&
      \Omega_{\Sigma}\mathbf{H}^\infty_{/\mathscr{B}GL(n)}(T^{\mathrm{st}}_M,\rho)
      \ar[r]
      \ar[d]
      &
      \mathbf{Aut}^{\infty}_{/\rho}(\Sigma)
      \ar[d]^{\psi_*}
      \\
   {*}\ar[r] &  \Omega_{T}\mathbf{H}^\infty_{/\mathscr{B}GL(n)}(T^{\mathrm{st}}_M,\tilde{\rho})
      \ar[d]
      \ar[r]
      &
      \mathbf{Aut}^{\infty}_{/\tilde{\rho}}(T)\ar[d]\\
&{*}\ar[r]& \mathbf{Aut}^\infty_{/\mathscr{B}GL(n)}(T^{\mathrm{st}}_M)    }
 \end{equation}
We therefore obtain the homotopy pullback diagram
\begin{equation}\label{diag2}
    \xymatrix{
     \Omega_{\beta}(\mathbf{H}^\infty_{/\mathscr{B}GL(n)})_{/\tilde{\rho}}(T,\Psi)
      \ar[r]
      \ar[d]
      &\mathrm{Diff}^\rho(M,\Sigma)
      \ar[d]^{\psi_*}
            \\
      {*}
      \ar[r]
      & \mathrm{Diff}^{\tilde\rho}(M,T)
    }
 \end{equation}
presenting $\mathrm{Diff}^\rho(M,\Sigma)$ as an extension of $\mathrm{Diff}^{\tilde\rho}(M,T)$ by the $\infty$-group
$ \Omega_{\beta}(\mathbf{H}^\infty_{/\mathscr{B}GL(n)})_{/\tilde{\rho}}(T,\Psi)$, i.e., by the loop space (at a given lift $\beta$) of the space $(\mathbf{H}^\infty_{/\mathscr{B}GL(n)})_{/\tilde{\rho}}(T,\Psi)$ of lifts of the $\tilde{\rho}$-structure $T$ on $M$ to a $\rho$-structure $\Sigma$.
Now notice that, by the Kan condition, we have a natural homotopy equivalence
\begin{equation}\label{eq.kan}
(\mathbf{H}^\infty_{/\mathscr{B}GL(n)})_{/\tilde{\rho}}(T,\Psi)\cong \mathbf{H}^\infty_{/Y}(\tau,\psi).
\end{equation}
Namely, since $T$ and $\Psi$ are fixed, the datum of the filler $\alpha$ is homotopically equvalent to the datum of the full 3-simplex, as $T,\Psi$ and $\alpha$ together give the datum of the horn at the vertex $Y$. As a consequence we see that the space of lifts of the $\tilde{\rho}$-structure $T$ to a $\rho$-structure $\Sigma$ is homotopy equivalent to the space
of lifts
\begin{equation}
\xymatrix{
&X\ar[d]^{\psi}\\
M\ar[r]^{\tau}\ar[ru]^{\sigma}&Y
\ar@{=>}(11.25,-6.25);(7.75,-9.75)^{\alpha}
}
\end{equation}
of $\tau$ to a morphism $\sigma\colon M\to X$. We refer the the reader to the Appendix for a rigorous proof of equivalence (\ref{eq.kan}).\\
The arguments above lead directly to
\begin{proposition}\label{proposition.extension}
Let $\rho:X\to{\mathscr{B}GL(n)}$ and $\tilde{\rho}:Y\to{\mathscr{B}GL(n)}$ be morphisms of $\infty$-stacks, and let $(\psi,\Psi):\rho\to{\tilde{\rho}}$ be a morphism in $\mathbf{H}^{\infty}_{/\mathscr{B}GL(n)}$. Let $(M,T)$ be a $\tilde{\rho}$-framed manifold, and let $\Sigma$ be a $\rho$-structure on $M$ lifting $T$ through $(\alpha,\beta)$. We have then the following homotopy pullback
\begin{equation}\label{diag3}
    \xymatrix{
     \Omega_{\alpha}\mathbf{H}^\infty_{/Y}(\tau,\psi)
      \ar[r]
      \ar[d]
      &\mathrm{Diff}^\rho(M,\Sigma)
      \ar[d]^{\psi_*}
            \\
      {*}
      \ar[r]
      & \mathrm{Diff}^{\tilde\rho}(M,T)
    }
 \end{equation}
 \end{proposition}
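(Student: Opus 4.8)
The plan is to exhibit (\ref{diag3}) as an instance of the general slice-extension mechanism, reorganized through the defining homotopy pullback (\ref{def.diff}) and finally rewritten by the Kan-condition equivalence (\ref{eq.kan}). Concretely, I would first observe that the homotopy fiber of $\psi_*\colon\mathrm{Diff}^\rho(M,\Sigma)\to\mathrm{Diff}^{\tilde\rho}(M,T)$ agrees with the corresponding fiber of $\psi_*\colon\mathbf{Aut}^\infty_{/\rho}(\Sigma)\to\mathbf{Aut}^\infty_{/\tilde\rho}(T)$. This is because (\ref{def.diff}) presents each framed diffeomorphism group as the homotopy pullback of $\mathrm{Diff}(M)$ against the relevant slice-automorphism group over $\mathbf{Aut}^\infty_{/\mathscr{B}GL(n)}(T^{\mathrm{st}}_M)$, while the common factor $\mathrm{Diff}(M)$ and the projection down to $\mathbf{Aut}^\infty_{/\mathscr{B}GL(n)}(T^{\mathrm{st}}_M)$ are left untouched by $\psi_*$; since pulling back preserves vertical homotopy fibers, the two fibers coincide.

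I would then build the grid (\ref{diag1}) and apply the pasting law. Its lower-right square is the homotopy fiber sequence supplied by the general extension theorem for $\tilde\rho\colon Y\to\mathscr{B}GL(n)$, whose fiber is $\Omega_T\mathbf{H}^\infty_{/\mathscr{B}GL(n)}(T^{\mathrm{st}}_M,\tilde\rho)$, and the analogous sequence for $\rho$ gives the outer right-hand rectangle; hence the upper-right square is a homotopy pullback by the pasting law. Taking vertical homotopy fibers there identifies the fiber of $\psi_*$ with the fiber of the induced map $\Omega_\Sigma\mathbf{H}^\infty_{/\mathscr{B}GL(n)}(T^{\mathrm{st}}_M,\rho)\to\Omega_T\mathbf{H}^\infty_{/\mathscr{B}GL(n)}(T^{\mathrm{st}}_M,\tilde\rho)$, and the upper-left square in turn exhibits this latter fiber as $\Omega_\beta(\mathbf{H}^\infty_{/\mathscr{B}GL(n)})_{/\tilde\rho}(T,\Psi)$. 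Combined with the first step, this is exactly the homotopy pullback (\ref{diag2}).

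It remains only to rewrite this fiber, and this is the step I expect to be delicate. Using (\ref{eq.kan}), with $T$ and $\Psi$ held fixed a point of $(\mathbf{H}^\infty_{/\mathscr{B}GL(n)})_{/\tilde\rho}(T,\Psi)$ is a horn at the vertex $Y$, so its unique filler recovers precisely the $2$-cell $\alpha$ and the double-slice hom-space is equivalent to $\mathbf{H}^\infty_{/Y}(\tau,\psi)$, with the distinguished point $\beta$ carried to $\alpha$. Looping at these matched basepoints turns the fiber of (\ref{diag2}) into $\Omega_\alpha\mathbf{H}^\infty_{/Y}(\tau,\psi)$ and produces (\ref{diag3}). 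The hard part is making this horn-filling argument precise: one must verify that the equivalence of lift-spaces is genuinely pointed, so that it is compatible with forming loop spaces, which is exactly where the Kan condition on the mapping complexes is needed. This careful verification is what I would defer to the Appendix, the remaining assembly being a formal consequence of the pasting law.
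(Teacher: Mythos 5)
Your proposal is correct and follows essentially the same route as the paper: reduce the fiber of $\psi_*$ on framed diffeomorphism groups to the fiber on slice automorphism $\infty$-groups via the defining pullbacks (\ref{def.diff}), assemble the grid (\ref{diag1}) by the pasting law and the loop-space fiber sequences coming from the slice hom-space lemma, and finally transport along the pointed equivalence (\ref{eq.kan}). The only cosmetic difference is that the paper's Appendix establishes (\ref{eq.kan}) as a consequence of a cofinality argument giving an equivalence of $\infty$-categories $(\mathbf{H}^\infty_{/\mathscr{B}GL(n)})_{/\tilde\rho}\simeq \mathbf{H}^\infty_{/Y}$, rather than by a direct horn-filling argument on hom-spaces, but the content and the pointedness issue you flag are the same.
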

 \begin{proof}
 Combine diagram (\ref{diag2}) with equivalence (\ref{eq.kan}), which preserves homotopy pullbacks.
 \end{proof}
\begin{remark} Proposition \ref{proposition.extension} gives a presentation of $\mathrm{Diff}^\rho(M,\Sigma)$ as an extension of $\mathrm{Diff}^{\tilde\rho}(M,T)$ by the $\infty$-group
$ \Omega_{\alpha}\mathbf{H}^\infty_{/Y}(\tau,\psi)$. Notice how, for $(T,\tau)$ the identity morphism, i.e.
\begin{equation}
\xymatrix{
Y\ar[rd]_{\tilde{\rho}}\ar[rr]^{\mathrm{id}_Y} && Y\ar[dl]^{\tilde{\rho}}\\
&\mathscr{B}GL(n)
 \ar@{=>}(20.25,-3.25);(16.75,-6.75)^{\mathrm{Id}}
 }
\end{equation}
the space $\mathbf{H}^\infty_{/Y}(\tau,\mathrm{id}_Y)$ is contractible since $\mathrm{id}_Y$ is the terminal object in the slice $\mathbf{H}^\infty_{/Y}$ and so one finds that the extension of $\mathrm{Diff}^{\tilde\rho}(M,T)$ is the trivial one in this case, as expected.
\end{remark}
\section{Lifting $\rho$-structures along homotopy fibres}
\label{section.lifting}
\label{homotopy-fibres}
In this section we will investigate a particularly simple and interesting case of the lifting procedure of $\rho$-structures, and
of extensions of $\rho$-diffeomorphisms $\infty$-groups, namely the case when $\psi\colon X\to Y$ is the homotopy fibre in $\mathbf{H}^\infty$ of a morphism $c\colon Y\to Z$ from $Y$ to some pointed stack $Z$.
\\
In this case, by the universal property of the homotopy pullback, the space  $\mathbf{H}^\infty_{/Y}(\tau,\psi)$ of lifts of the $\tilde{\rho}$-structure $\tau$ to a $\rho$-structure $\sigma$ is given by the space of homotopies between the composite morphism $c\circ \tau$ and the trivial morphism $M\to Z$ given by the constant map on the marked point of $Z$:
\begin{equation}
\xymatrix{
M\ar@/^1pc/[rrd]\ar@/_1pc/[rdd]_{\tau}\ar@{-->}[dr]^{\sigma}\\
&X\ar[r]\ar[d]_{\psi}& {*}\ar[d]\\
&Y\ar[r]^c& Z
}
\end{equation}
This fact has two important consequences:
\begin{itemize}
\item a lift $\sigma$ of $\tau$ exists if and only if the class of $c\circ \tau$ in $\pi_0\mathbf{H}^{\infty}(M,Z)$ is the trivial class (the class of the constant map on the marked point $z$ of $Z$);
\item  when a lift exists, the space $\mathbf{H}^{\infty}_{/Y}(\tau,\psi)$ is a torsor for the $\infty$-group of self-homotopies of the constant map $M\to Z$, i.e., for the $\infty$-group object $\Omega\mathbf{H}^{\infty}(M,Z)$. In particular, as soon as $\mathbf{H}^{\infty}_{/Y}(\tau,\psi)$ is nonempty, any lift $\sigma$ of $\tau$ induces an equivalence of $\infty$-groupoids $\mathbf{H}^{\infty}_{/Y}(\tau,\psi)\cong \Omega\mathbf{H}^\infty(M, Z)$ and so an equivalence
\begin{equation}
\Omega_\alpha\mathbf{H}^\infty_{/Y}(\tau,\psi)\cong \Omega^2\mathbf{H}^\infty(M,  Z).
\end{equation}
\end{itemize}
\vskip .3 cm
\noindent
Moreover, as soon as $(Z,z)$ is a geometrically discrete pointed $\infty$-stack, we have $\Omega \mathbf{H}^\infty(M,Z)\cong \mathbf{H}^\infty(M,\Omega Z)$, where $\Omega Z$ denotes the loop space of $Z$ in $\mathbf{H}$ at the distinguished point $z$. In other words, for a geometrically discrete $\infty$-stack $Z$, the loop space of $Z$ in $\mathbf{H}$ also provides a loop space object for $Z$ in $\mathbf{H}^\infty$. Namely, by definition of $\mathbf{H}^\infty$, showing that
\begin{equation}
\xymatrix{
\mathbf{H}^\infty(W,\Omega Z)\ar[r]\ar[d]& {*}\ar[d]\\
{*}\ar[r] & \mathbf{H}^\infty(W, Z)
}
\end{equation}
is a homotopy pullback of $\infty$-groupoids for any $\infty$-stack $W$ amounts to showing that
{
\begin{equation}
\xymatrix{
\big\vert[W,\Omega Z]\big\vert\ar[r]\ar[d]& {*}\ar[d]\\
{*}\ar[r] & \big\vert[W, Z]\big\vert
}
\end{equation}
}
is a homotopy pullback, and this in turn follows from the fact that $[W,-]$ preserves homotopy pullbacks and geometrical discreteness, and that {$\vert-\vert$} preserves homotopy pullbacks along morphisms of geometrically discrete stacks \cite[thm. 3.8.19]{schreiber}. If the pointed stack $(Z,z)$ is geometrically discrete, then so is the stack $\Omega Z$ (pointed at the constant loop at $z$), and so
%
\begin{equation}
\Omega^2 \mathbf{H}^{\infty}(M, Z)\cong\Omega \mathbf{H}^{\infty}(M,\Omega Z)\cong \mathbf{H}^{\infty}(M,\Omega^2 Z).
\end{equation}
%
Therefore, we can assemble the general considerations of the previous section in the following
\begin{theorem}\label{prop.extension}
 Let $\psi\colon X\to Y$ be the homotopy fibre of a morphisms of smooth $\infty$-stacks $Y\to{Z}$, where $Z$ is pointed and geometrically discrete. For any $\tilde{\rho}$-structured manifold $(M,\tau)$, we have a sequence of natural homotopy pullbacks
\begin{equation}\label{extension}
    \xymatrix{
      \mathbf{H}^{\infty}(M,\Omega^2 Z)\ar[r]
      \ar[d]
      &\mathrm{Diff}^\rho(M,\sigma)
      \ar[d]^{\psi_*}\ar[r]&{*}\ar[d]
            \\
      {*}
      \ar[r]
      & \mathrm{Diff}^{\tilde\rho}(M,\tau)\ar[r]&  \mathbf{H}^{\infty}(M,\Omega Z)
    }
 \end{equation}
 whenever a lift to of $\tau$ to a $\rho$-structure $\sigma$ exists.
\end{theorem}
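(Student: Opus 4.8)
The plan is to assemble the diagram \eqref{extension} from Proposition~\ref{proposition.extension} by delooping exactly once, the only genuinely new ingredient being the geometric discreteness of $Z$. First I would dispose of the left-hand square, which is nothing but the homotopy pullback \eqref{diag3} of Proposition~\ref{proposition.extension}; it therefore suffices to identify its fibre $\Omega_\alpha\mathbf{H}^\infty_{/Y}(\tau,\psi)$ with $\mathbf{H}^\infty(M,\Omega^2 Z)$. Since $\psi$ is the homotopy fibre of $c\colon Y\to Z$, the universal property of the homotopy pullback exhibits the lift space $\mathbf{H}^\infty_{/Y}(\tau,\psi)$ as the space of null-homotopies of $c\circ\tau$; as a lift is assumed to exist, this space is a torsor under $\Omega\mathbf{H}^\infty(M,Z)$, hence equivalent to it, so that $\Omega_\alpha\mathbf{H}^\infty_{/Y}(\tau,\psi)\simeq\Omega^2\mathbf{H}^\infty(M,Z)$. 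Finally, because $Z$ (and therefore $\Omega Z$) is geometrically discrete, $\mathbf{H}^\infty(M,-)$ commutes with looping — the consequence of \cite[thm.~3.8.19]{schreiber} recorded just before the statement — giving $\Omega^2\mathbf{H}^\infty(M,Z)\simeq\mathbf{H}^\infty(M,\Omega^2 Z)$.

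Next I would record the outer rectangle. Reading off its two terminal corners, the rectangle asserts precisely that $\mathbf{H}^\infty(M,\Omega^2 Z)$ is the based loop space of $\mathbf{H}^\infty(M,\Omega Z)$. This is once more an instance of the commutation of $\mathbf{H}^\infty(M,-)$ with $\Omega$ along the geometrically discrete object $\Omega Z$, so the outer rectangle is automatically a homotopy pullback.

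The substance of the theorem is thus the right-hand square, i.e.\ the production of a classifying map $\mathrm{Diff}^{\tilde\rho}(M,\tau)\to\mathbf{H}^\infty(M,\Omega Z)$ whose homotopy fibre is $\mathrm{Diff}^\rho(M,\sigma)$. Here I would deliberately \emph{not} invoke the pasting law: knowing that the left square and the outer rectangle are pullbacks does not by itself yield the right one, since a fibration whose fibre happens to be a loop space need not be principal. Instead I would return to the origin of $\psi_*$. Applying $\mathbf{H}^\infty(M,-)=\vert[M,-]\vert$ to the fibre sequence $X\xrightarrow{\psi}Y\xrightarrow{c}Z$, and using that $[M,-]$ preserves homotopy pullbacks and geometric discreteness while $\vert-\vert$ preserves homotopy pullbacks over the geometrically discrete $[M,Z]$ \cite[thm.~3.8.19]{schreiber}, one obtains a genuine fibre sequence $\mathbf{H}^\infty(M,X)\xrightarrow{\psi_*}\mathbf{H}^\infty(M,Y)\xrightarrow{c_*}\mathbf{H}^\infty(M,Z)$. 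Transporting this through the defining pullbacks \eqref{def.diff} together with the identification of the slice automorphism groups as loop spaces of structure spaces from \eqref{diag1}, and looping once, should realize $\psi_*$ as the pullback of the based map $c_*$, hence as a fibration admitting a classifying map to $\Omega\mathbf{H}^\infty(M,Z)\simeq\mathbf{H}^\infty(M,\Omega Z)$; this is the required map and exhibits the right square as a homotopy pullback.

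The main obstacle is exactly this last step: making precise, at the level of the diffeomorphism groups rather than the bare mapping spaces, that the extension $\psi_*$ is principal and so carries a classifying map to $\mathbf{H}^\infty(M,\Omega Z)$. Everything else is bookkeeping of loop-space identifications that hold precisely because the tip $Z$ is geometrically discrete; the role of that hypothesis, used twice, is what lets one pass freely between $\Omega\mathbf{H}^\infty(M,-)$ and $\mathbf{H}^\infty(M,\Omega-)$ and thereby deloop the extension of Proposition~\ref{proposition.extension} into the stated sequence of natural homotopy pullbacks.
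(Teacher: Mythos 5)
Your treatment of the left-hand square and of the outer rectangle is correct and coincides with the paper's own route: the left square is diagram \eqref{diag3} of Proposition \ref{proposition.extension}, and its fibre is identified with $\mathbf{H}^{\infty}(M,\Omega^2 Z)$ by combining the torsor property of the space of null-homotopies of $c\circ\tau$ with the commutation of $\mathbf{H}^{\infty}(M,-)$ and $\Omega$ over the geometrically discrete $Z$. You are also right that the pasting law cannot produce the right-hand square from the other two.

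The gap is exactly where you locate it: no map $\mathrm{Diff}^{\tilde\rho}(M,\tau)\to\mathbf{H}^{\infty}(M,\Omega Z)$ is actually constructed. The fibre sequence $\mathbf{H}^{\infty}(M,X)\to\mathbf{H}^{\infty}(M,Y)\to\mathbf{H}^{\infty}(M,Z)$ lives entirely among structure spaces and never sees $\mathrm{Diff}(M)$, so ``transporting it through the defining pullbacks'' is precisely the assertion that needs proof. Moreover, no principality or classifying-map argument is required: the missing ingredient is the orbit--stabilizer presentation already contained in the paper's Lemma A.2 (\cite[Lemma 5.5.5.12]{lurieHTT}). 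Applied, as in the proof of Lemma A.3, in the $\infty$-category $\mathbf{C}=(\mathbf{H}^\infty_{/\mathscr{B}GL(n)})_{/\tilde{\rho}}\simeq\mathbf{H}^\infty_{/Y}$ to the lift $\alpha\colon\tau\to\psi$, it yields a homotopy pullback
\begin{equation*}
\xymatrix{
\mathbf{Aut}^{\infty}_{/\rho}(\Sigma)\ar[r]\ar[d] & \ast\ar[d]^{[\alpha]}\\
\mathbf{Aut}^{\infty}_{/\tilde{\rho}}(T)\ar[r]^-{\alpha\circ(-)} & \mathbf{H}^\infty_{/Y}(\tau,\psi)
}
\end{equation*}
whose bottom arrow is the orbit map $g\mapsto g^{*}\sigma$ of the action on the space of lifts, exhibiting the slice automorphism group as the homotopy stabilizer of $\sigma$. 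Pasting this on the right of the square of Lemma A.1 and trivializing the torsor $\mathbf{H}^\infty_{/Y}(\tau,\psi)\simeq\mathbf{H}^{\infty}(M,\Omega Z)$ by the chosen lift $\sigma$ gives the right-hand square of \eqref{extension}; the left square and the outer rectangle then also follow from it by pasting. This is, moreover, what identifies the boundary map $\Gamma^{\tilde{\rho}}(M,\tau)\to\pi_1\mathbf{H}^{\infty}(M,Z)$ with $f\mapsto f^{*}\sigma-\sigma$ as used in Section \ref{section.mapping}.
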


\subsection{The case of manifolds with boundary}
With an eye to topological quantum field theories, it is interesting to consider also the case of $k$-dimensional manifolds with boundary $(M,\partial M)$. Since the boundary $\partial M$ comes with a collar in $M$, i.e. with a neighbourhood in $M$ diffeomorphic to $\partial M\times [0,1)$ the restriction of the tangent bundle of $M$ to $\partial M$ splits as $TM\vert_{\partial M}\cong T\partial M\oplus \underline{\mathbb{R}}_{\partial M}$ and this gives a natural homotopy commutative diagram
\begin{equation}
\xymatrix{
\partial M\ar[rd]_{T^{\mathrm{st}}}\ar[rr]^{\iota}&& M\ar[dl]^{T^{\mathrm{st}}}\\
&\mathscr{B}GL(n)
}
\end{equation}
for any $n\geq k$. In other words, the embedding of the boundary, $\iota\colon \partial M\to M$ is naturally a morphism in the slice over $\mathscr{B}GL(n)$. This means that any $\tilde{\rho}$-framing on $M$ can be pulled back to a $\tilde{\rho}$-framing on $\partial M$:
\begin{equation}
\iota^*\colon \mathbf{H}^\infty_{/\mathscr{B}GL(n)}(T^{\mathrm{st}},\tilde{\rho})\to \mathbf{H}^\infty_{/\mathscr{B}GL(n)}(T^{\mathrm{st}}\bigr\vert_{\partial M},\tilde{\rho}).
\end{equation}
That is, for any  $\tilde{\rho}$-framing on $M$ we have a natural homotopy commutative diagram
\begin{equation}
\xymatrix{
\partial M\ar[rd]^{\tau\vert_{\partial M}}\ar[rdd]_{T^{\mathrm{st}}\vert_{\partial M} }\ar[rr]^{\iota}&& M\ar[dl]_{\tau}\ar[ddl]^{T^{\mathrm{st}}}\\
&Y\ar[d]\\
&\mathscr{B}GL(n)
}
\end{equation}
realizing $\iota$ as a morphism in the slice over $Y$. Therefore we have a further pullback morphism
\begin{equation}
\iota^*\colon \mathbf{H}^\infty_{/Y}(\tau,\psi)\to \mathbf{H}_{/Y}(\tau\vert_{\partial M},\psi)
\end{equation}
for any morphism $\psi\colon (X,\rho)\to (Y,\tilde{\rho})$ in the slice over $\mathscr{B}GL(n)$. For any fixed $\rho$-framing $\text{\cyr{zh}}$ on $\partial M$ we can then form the space of $\rho$-framings on the $\tilde{\rho}$-framed manifold $M$ extending $\text{\cyr{zh}}$. This is the homotopy fibre of $\iota^*$ at $\text{\cyr{zh}}$:
\begin{equation}
\xymatrix{
\mathbf{H}_{/Y}^{\infty,\text{\cyr{zh}}}((M,\partial M,\tau),(X,\psi))\ar[r]\ar[d]&{*}\ar[d]^{\text{\cyr{zh}}}\\
\mathbf{H}_{/Y}(\tau,\psi)\ar[r]^-{\iota^*} &\mathbf{H}_{/Y}(\tau\vert_{\partial M}),\psi)}
\end{equation}
Reasoning as in Section \ref{homotopy-fibres}, when the morphism $\psi\colon X\to Y$ is the homotopy fibre of a morphism $c\colon Y\to Z$ one sees that, as soon as the $\rho$-structure $\text{\cyr{zh}}$ on $\partial M$ can be extended to a $\rho$-structure on $M$, then the space $\mathbf{H}_{/Y}^{\infty,\text{\cyr{zh}}}((M,\partial M,\tau),(X,\psi))$ of such extensions is a torsor for the $\infty$-group $\mathbf{H}^{\infty,\mathrm{rel}}(M,\partial M;\Omega Z)$ defined by the homotopy pullback
\begin{equation}
\xymatrix{
\mathbf{H}^{\infty,\mathrm{rel}}(M,\partial M;\Omega Z)\ar[r]\ar[d]&{*}\ar[d]^{\mathbf{0}}\\
\mathbf{H}^\infty(M,\Omega Z)\ar[r]^{\iota^*} &\mathbf{H}^\infty(\partial M,\Omega Z)}
\end{equation}
In particular, for $Z=\mathbf{B}^nA$ for some discrete abelian group $A$, the space $\mathbf{H}^{\infty,\mathrm{rel}}(M,\partial M;\mathbf{B}^{n-1}A)$ is the space whose set of connected components is the $(n-1)$-th relative cohomology group of $(M,\partial M)$:
\begin{equation}
\pi_0\mathbf{H}^{\infty,\mathrm{rel}}(M,\partial M;\mathbf{B}^{n-1}A)\cong H^{n-1}(M,\partial M;A).
\end{equation}
Moreover, since $\mathbf{B}^nA$ is $(n-1)$-connected, we see that any homotopy from $c\circ \tau\vert_{\partial M}\colon \partial M \to \mathbf{B}^nA$ to the trivial map can be extended to a homotopy from $c\circ \tau\colon M \to \mathbf{B}^nA$ to the trivial map, as soon as $\dim M<n$. In other words, for $Z=\mathbf{B}^nA$,  if $k<n$ every $\rho$-structure on $\partial M$ can be extended to a $\rho$-structure on $M$.
\\
\\
The space $\mathbf{H}_{/Y}^{\infty,\text{\cyr{zh}}}((M,\partial M,\tau),(X,\psi))$  has a natural interpretation in terms of ${\rho}$-framed cobordism: it is the space of morphisms from the empty manifold to the ${\rho}$-framed  manifold $(\partial M,\text{\cyr{zh}})$, whose underlying non-framed cobordism is $M$. As such, it carries a natural action of the $\infty$-group of ${\rho}$-framings on the cylinder $\partial M\times [0,1]$ which restrict to the ${\rho}$-framing $\text{\cyr{zh}}$ both on $\partial M\times\{0\}$ and on $\partial M\times\{1\}$. These are indeed precisely the $\rho$-framed cobordisms lifting the trivial non-framed cobordism. Geometrically this action is just the glueing of such a $\rho$-framed cylinder along $\partial M$, as a collar in $M$. On the other hand, by the very definition of $\mathbf{H}^\infty$, this $\infty$-group of ${\rho}$-framed cylinders is nothing but the loop space $\Omega_{\text{\cyr{zh}}}(\mathbf{H}^\infty_{/\mathscr{B}GL(n)})_{/{\rho}}(T^{\mathrm{st}}\bigr\vert_{\partial M}, \psi)$, i.e., the loop space at $\text{\cyr{zh}}$ of the space of $\rho$-structures on $\partial M$ lifting the $\tilde{\rho}$-structure $\tau\vert_{\partial M}$. Comparing this to the diagram (\ref{diag2}), we see that the space of $\rho$-structures on $M$ extending a given $\rho$-structure on $\partial M$ comes with a natural action of the $\infty$-group
which is the centre of the extension $\mathrm{Diff}^{\tilde\rho}(\partial M,\text{\cyr{zh}})$ of $\mathrm{Diff}^{\tilde\rho}(M,\tau\vert_{\partial M})$.\footnote{This should be compared to Segal's words in \cite{SegalCFT}: ``An oriented 3-manifold $Y$ whose boundary $\partial Y$ is rigged has itself a set of riggings which form a principal homogeneous set under the group $\mathbb{Z}$ which is the centre of the central extension of $\mathrm{Diff}(\partial Y)$.''}
In the case $\psi\colon X\to Y$ is the homotopy fibre of a morphism $c\colon Y\to \mathbf{B}^nA$, passing to equivalence classes we find the natural action of $H^{n-2}(\partial M,A)$ on the relative cohomology group $H^{n-1}(M,\partial M;A)$ given by the suspension isomorphism $H^{n-2}(\partial M,A)\cong H^{n-1}(\partial M\times [0,1],\partial M\times\{0,1\},A)$ combined with the natural translation action
\begin{equation}
H^{n-1}(M,\partial M;A)\times H^{n-1}(\partial M\times [0,1],\partial M\times\{0,1\},A)\to H^{n-1}(M,\partial M;A).
\end{equation}
For instance, if $M$ is a connected oriented 3-manifold with connected boundary $\partial M$ and we choose $n=4$ and $A=\mathbb{Z}$, then we get the translation action of $\mathbb{Z}$ on itself.\footnote{Again, compare to Segal's prescription on the set of riggings on a oriented 3-manifold.}

 \section{Mapping class groups of $\rho$-framed manifolds}\label{section.mapping}
In this final section, we consider an application of the general notion of $\rho$-structure developed in the previous sections to investigate extensions of the mapping class group of smooth manifolds.\\
Inspired by the classical notion of mapping class group, see for instance \cite{HatcherDiffeo}, we consider the following
\begin{definition}
Let $M$ be a $k$-dimensional manifold, and let $\rho\colon X\to  \mathscr{B}GL(n)$ be a morphisms of smooth $\infty$-stacks, with $k\leq n$.
The \emph{mapping class group} $\Gamma^\rho(M,\sigma)$ of a $\rho$-framed manifold $(M,\sigma)$ is the group of connected components of the $\rho$-diffeomorphism $\infty$-group of $(M,\sigma)$, namely
\begin{equation}
\Gamma^\rho(M,\sigma):=\pi_0\mathrm{Diff}^\rho(M,\sigma)
\end{equation}
\end{definition}
In the setting of the Section \ref{section.lifting}, we consider the case in which the $\infty$-stack $X$ is the homotopy fiber of a morphism $Y\to{Z}$, with $Z$ a geometrically discrete $\infty$-stack.
Then, induced by diagram (\ref{extension}), we have the following long exact sequence in homotopy
\begin{equation}\label{exactsequence}
\cdots \to\pi_1\mathrm{Diff}^\rho(M,\sigma) \to\pi_1\mathrm{Diff}^{\tilde\rho}(M,\tau)\to \pi_2\mathbf{H}^{\infty}(M,Z)\to \Gamma^\rho(M,\sigma)\to \Gamma^{\tilde{\rho}}(M,\tau)\to \pi_1\mathbf{H}^{\infty}(M,Z).
\end{equation}
Notice that the morphism
\begin{equation}
\Gamma^{\tilde{\rho}}(M,\tau)\to \pi_1\mathbf{H}^{\infty}(M,Z)
\end{equation}
is a homomorphism at the $\pi_0$ level, so it is only a morphism of pointed sets and \emph{not} a morphism of groups. It is the morphism that associates with a $\rho$-diffeomorphism $f$ the pullback of the lift $\sigma$ of $\tau$. In other words, it is the morphism of pointed sets from the set of isotopy classes of $\rho$-diffeomorphisms to the set of equivalence classes of lifts induced by the natural action
\begin{equation}
\Gamma^{\tilde{\rho}}(M,\tau) \times \{\text{(equivalence classes of) lifts of }\tau\}\to \{\text{(equivalence classes of) lifts of }\tau\}
\end{equation}
once one picks a distinguished element $\sigma$ in the set (of equivalence classes of) of lifts and uses it to identify this set with $\pi_0\mathbf{H}^{\infty}(M,\Omega Z)\cong \pi_1\mathbf{H}^{\infty}(M,Z)$.
A particularly interesting situation is the case when $c$ is a degree $d$ characteristic class for $Y$, i.e., when $c\colon Y\to \mathbf{B}^{d}A$ for some discrete abelian group $A$, and $M$ is a closed manifold. Since $\mathbf{B}^{d}A$ is a geometrically discrete $\infty$-stack, we have that $\mathbf{H}^{\infty}(M,\mathbf{B}^{d}A)$ is equivalent, as an $\infty$-groupoid, to
$\mathbf{H}(M,\mathbf{B}^{d} A)$ . Consequently, we obtain that $\pi_k\mathbf{H}^{\infty}(M,\mathbf{B}^{d}A)=H^{d-k}(M,A)$ for $0\leq k\leq d$ (and zero otherwise): in particular, the obstruction to lifting a
 $\tilde{\rho}$-framing $\tau$ on $M$  to a $\rho$-framing $\sigma$ is given by an element in $H^{d}(M,A)$. When this obstruction vanishes, hence when a lift $\sigma$ of $\tau$ does exist, the long exact sequence above reads as
\begin{equation}\label{seq1}
\cdots \to\pi_1\mathrm{Diff}^{\tilde\rho}(M,\tau)\to H^{d-2}(M,A)\to \Gamma^\rho(M,\sigma)\to \Gamma^{\tilde{\rho}}(M,\tau)\to H^{d-1}(M,A)
\end{equation}
for $d\geq 2$, and simply as
\begin{equation}\label{seq2}
\cdots \to\pi_1\mathrm{Diff}^{\tilde\rho}(M,\tau)\to 1\to \Gamma^\rho(M,\sigma)\to \Gamma^{\tilde{\rho}}(M,\tau)\to H^{0}(M,A)
\end{equation}
for $d=1$.
\begin{remark}
The long exact sequences (\ref{seq1}) and (\ref{seq2}) are a shadow of Theorem \ref{prop.extension}, which is a more general extension result for the \emph{whole} $\infty$-group $\mathrm{Diff}^\rho(M,\sigma)$.
\end{remark}
The morphism of pointed sets $\Gamma^{\tilde{\rho}}(M,\tau)\to H^{d-1}(M,A)$ is easily described: once a lift $\sigma$ for $\tau$ has been chosen, the space of lifts is identified with $\mathbf{H}^{\infty}(M,\mathbf{B}^{d-1}A)$ and the natural pullback action of the $\tilde{\rho}$-diffeomorphism group of $M$ on the space of maps from $M$ to $\mathbf{B}^{d-1}A$ induces the morphism
\begin{equation}
\begin{array}{rl}
\mathrm{Diff}^{\tilde\rho}(M,\tau)&\to \mathbf{H}^{\infty}(M,\mathbf{B}^{d-1}A)\\
f&\mapsto f^*\sigma-\sigma
\end{array}
\end{equation}
where we have written $f^*\sigma-\sigma$ for the element in $\mathbf{H}^{\infty}(M,\mathbf{B}^{d-1}A)$ which represents the ``difference'' between $f^*\sigma$ and $\sigma$ in the space of lifts of $\tau$ seen as a $\mathbf{H}^{\infty}(M,\mathbf{B}^{d-1}A)$-torsor. The morphism $\Gamma^{\tilde{\rho}}(M,\tau)\to H^{d-1}(M,A)$ is obtained by passing to $\pi_0$'s and so we see in particular from the long exact sequence (\ref{seq1}) that the image of $\Gamma^{\rho}(M,\tau)$ into $\Gamma^{\tilde{\rho}}(M,\tau)$ consist of precisely the isotopy classes of those $\tilde{\rho}$-diffeomorphisms of $(M,\tilde{\rho})$ which fix the $\rho$-structure $\sigma$ up to homotopy.
\\
Similarly, for $d\geq 2$, the morphism of groups $\pi_1\mathrm{Diff}^{\tilde\rho}(M,\tau)\to H^{d-2}(M,A)$ in sequence (\ref{seq1}) can be described explicitly as follows. A closed path $\gamma$ based at the identity in $\mathrm{Diff}^{\tilde\rho}(M,\tau)$ defines then a morphism $\gamma^\#\colon M\times [0,1]\to \mathbf{B}^{d-1}A$, as the composition
\begin{equation}
M\times [0,1]\to M\xrightarrow{\mathbf{0}} \mathbf{B}^{d-1}A,
\end{equation}
where the first arrow is the homotopy from the identity of $M$ to itself and where $\mathbf{0}\colon M\to \mathbf{B}^{d-1}A$ is the \emph{collapsing} morphism, namely the morphism obtained as the composition $M\to{*}\to{\mathbf{B}^{d-1}A}$ (here we are using that $\mathbf{B}^{d-1}A$ comes naturally equipped with a base point). The image of $[\gamma]$ in $H^{d-2}(M,A)$ is then given by the element $[\gamma^\#]$ in the relative cohomology group
\begin{equation}
H^{d-1}(M\times[0,1],M\times\{0,1\},A)\cong H^{d-1}(\mathbf{\Sigma} M,A)\cong H^{d-2}(M,A)\,.
\end{equation}
By construction, $[\gamma^\#]$  is the image in $H^{d-1}(M\times[0,1],M\times\{0,1\},A)\cong H^{d-2}(M,A)$ of the zero class in $H^{d-1}(M,A)$ via the pullback morphism $M\times [0,1]\to M$, so it is the zero class in $H^{d-1}(M\times[0,1],M\times\{0,1\},A)$. That is, the morphism $\pi_1\mathrm{Diff}^{\tilde\rho}(M,\tau)\to H^{d-2}(M,A)$ is the zero morphism, and we obtain the short exact sequence
 \begin{equation}\label{extension.h}
1\to H^{d-2}(M,A)\to \Gamma^\rho(M,\sigma)\to \Gamma^{\tilde{\rho}}(M,\tau) \to H^{d-1}(M,A)
\end{equation}
showing that $\Gamma^\rho(M,\sigma)$ is a $H^{d-2}(M,A)$-extension of a subgroup of $\Gamma^{\tilde{\rho}}(M,\tau)$: namely, the subgroup is the $\Gamma^{\tilde{\rho}}(M,\tau)$-stabilizer of the element of $H^{d-1}(M,A)$ corresponding to the lift $\sigma$ of $\tau$. The action of this stabiliser on $H^{d-2}(M,A)$ is the pullback action of $\tilde{\rho}$-diffeomorphisms of $M$ on the $(d-2)$-th cohomology group of $M$ with coefficients in $A$. Since this action is not necessarily trivial, the $H^{d-2}(M,A)$-extension $\Gamma^\rho(M,\sigma)$ of the stabiliser of $\sigma$ is not a central extension in general.
\subsection{Oriented and spin manifolds, and $r$-spin surfaces}
Before discussing $p_1$-structures and their modular groups, which is the main goal of this note, let us consider two simpler but instructive examples: oriented manifolds and spin curves.\\

Since the $\infty$-stack $\mathscr{B}SO(n)$ is the homotopy fibre of the first Stiefel-Whitney class
\begin{equation}
w_1\colon \mathscr{B}O(n)\to \mathbf{B}\mathbb{Z}/2\mathbb{Z}
\end{equation}
an $n$-dimensional manifold can be oriented if and only if $[w_1\circ T_M]$ is the trivial element in $\pi_0\mathbf{H}^{\infty}(M,\mathbf{B}\mathbb{Z}/2\mathbb{Z})=H^1(M,\mathbb{Z}/2\mathbb{Z})$. When this happens, the space of possible orientations on $M$ is equivalent to $\mathbf{H}^{\infty}(M,\mathbb{Z}/2\mathbb{Z})$, so when $M$ is connected it is equivalent to a 2-point set. For a fixed orientation on $M$, we obtain from (\ref{seq2}) with $A=\mathbb{Z}/2\mathbb{Z}$ the exact sequence
\begin{equation}
1\to \Gamma^{\mathrm{or}}(M)\to \Gamma(M) \to \mathbb{Z}/2\mathbb{Z}
\end{equation}
where $\Gamma^{\mathrm{or}}(M)$ denotes the mapping class group of oriented diffeomorphisms of $M$, and where the rightmost morphism is induced by the action of the diffeomorphism group of $M$ on the set of its orientations. The oriented mapping class group of $M$ is therefore seen to be a subgroup of order 2 in $\Gamma(M)$ in case there exists at least an orientation reversing diffeomorphism of $M$, and to be the whole $\Gamma(M)$ when such a orientation reversing diffeomorphism does not exist (e.g., for $M=\mathbb{P}^{n/2}\mathbb{C}$, for $n\equiv 0 \mod 4$).\\

Consider now the $\infty$-stack $\mathscr{B}\mathrm{Spin}(n)$ for $n\geq 3$. It can be realised as the homotopy fibre of the second Stiefel-Whitney class
\begin{equation}
w_2\colon \mathscr{B}SO(n)\to \mathbf{B}^2\mathbb{Z}/2\mathbb{Z}.
\end{equation}
An oriented $n$-dimensional manifold $M$ will then admit a spin structure if and only if $[w_2\circ T_M]$ is the trivial element in $\pi_0\mathbf{H}^{\infty}(M,\mathbf{B}^2\mathbb{Z}/2\mathbb{Z})=H^2(M,\mathbb{Z}/2\mathbb{Z})$. When this happens, the space of possible orientations on $M$ is equivalent to $\mathbf{H}^{\infty}(M,\mathbf{B}\mathbb{Z}/2\mathbb{Z})$, and we obtain, for a given spin structure $\sigma$ on $M$ lifting the orientation of $M$, the exact sequence
\begin{equation}
1\to H^0(M,\mathbb{Z}/2\mathbb{Z})\to \Gamma^{\mathrm{Spin}}(M,\sigma)\to \Gamma^{\mathrm{or}}(M)\to H^1(M,\mathbb{Z}/2\mathbb{Z}).
\end{equation}
In particular, if $M$ is connected, we get the exact sequence
\begin{equation}
1\to \mathbb{Z}/2\mathbb{Z}\to \Gamma^{\mathrm{Spin}}(M,\sigma)\to \Gamma^{\mathrm{or}}(M)\to H^1(M,\mathbb{Z}/2\mathbb{Z}).
\end{equation}
Since, for a connected $M$, the pullback action of oriented diffeomorphisms on $H^0(M,\mathbb{Z}/2\mathbb{Z})$ is trivial, we see that in this case the group $\Gamma^{\mathrm{Spin}}(M,\sigma)$ is a $\mathbb{Z}/2\mathbb{Z}$-central extension of the subgroup of $\Gamma^{\mathrm{or}}(M)$ consisting of (isotopy classes of) orientation preserving diffeomorphisms of $M$ which fix the spin structure $\sigma$ (up to homotopy). The group $\Gamma^{\mathrm{Spin}}(M,\sigma)$ and its relevance to Spin TQFTs are discussed in detail in \cite{masbaum}.\\

For $n=2$, the homotopy fibre of $w_2\colon \mathscr{B}SO(2)\to \mathbf{B}^2\mathbb{Z}/2\mathbb{Z}$ is again $\mathscr{B}SO(2)$ with the morphism $\mathscr{B}SO(2)\to \mathscr{B}SO(2)$ induced by the group homomorphism
\begin{equation}
\begin{array}{rl}
SO(2)&\to SO(2)\\
x&\mapsto x^2
\end{array}
\end{equation}
Since the second Stiefel-Withney class of an oriented surface $M$ is the $\textrm{mod 2}$ reduction of the first Chern class of the holomorphic tangent bundle of $M$ (for any choice of a complex structure compatible with the orientation), and $\langle c_1(T^{\mathrm{hol}})M| [M]\rangle =2-2g$, where $g$ is the genus of $M$, one has that $[w_2\circ T_M]$ is always the zero element in $H^2(M,\mathbb{Z}/2\mathbb{Z})$ for a compact oriented surface, and so the orientation of $M$ can always be lifted to a spin structure. More generally, one can consider the group homomorphism $SO(2)\to SO(2)$ given by $x\mapsto x^r$, with $r\in \mathbb{Z}$. We have then a homotopy fibre sequence
\begin{equation}
\xymatrix{
\mathscr{B}SO(2)\ar[rr]\ar[d]_{\rho_{1/r}}&&{*}\ar[d]\\
\mathscr{B}SO(2)\ar[rr]^{c_{(x\to x^r)}}&&\mathbf{B}^2\mathbb{Z}/2\mathbb{Z}
}
\end{equation}
In this case one sees that an \emph{$r$-spin structure} on an oriented surface $M$, i.e. a lift of the orientation of $M$ through $\rho_{1/r}$, exists if and only if $2-2g\equiv 0 \mod r$. When this happens, one obtains the exact sequence
\begin{equation}
1\to \mathbb{Z}/r\mathbb{Z}\to \Gamma^{1/r}(M,\sigma)\to \Gamma^{\mathrm{or}}(M)\to H^1(M,\mathbb{Z}/r\mathbb{Z}),
\end{equation}
which exhibits the $r$-spin mapping class group $\Gamma^{1/r}(M,\sigma)$ as a  $\mathbb{Z}/r\mathbb{Z}$-central extension of the subgroup of $\Gamma^{\mathrm{or}}(M)$ consisting of isotopy classes of orientation preserving diffeomorphisms of $M$  fixing the $r$-spin structure $\sigma$ (up to homotopy). The group $\Gamma^{1/r}(M,\sigma)$ appears as the fundamental group of the moduli space of $r$-spin Riemann surfaces, see \cite{randal-williams1,randal-williams2}.

\subsection{$p_1$-structures on oriented surfaces}
\label{sect.p1structures}
Let us now finally specialise the general construction above to the case of $p_1$-structures on closed oriented surfaces, to obtain the $\mathbb{Z}$-central extensions considered in \cite{SegalCFT} around page 476. In particular we will see, how $p_1$-structures provide a simple realisation of Segal's idea of extended surfaces and 3-manifolds (see also \cite{BunkeNaumann, CHMV}).\footnote{In \cite{SegalCFT}, the extension is defined in terms of ``riggings'', a somehow ad hoc construction depending on the contractiblity of Teichm\"fuller spaces and on properties of the $\eta$-invariant of metrics on 3-manifolds. Segal says: ``I've not been able to think of a less sophisticated definition of a rigged surface, although there are many possible variants. The essential idea is to associate \emph{functorially} to a smooth surface a space -such as $\mathcal{P}_X$- which has fundamental group $\mathbb{Z}$.''}To this aim, our stack $Y$ will be the stack $\mathscr{B}SO(n)$ for some $n\geq 3$, the stack $Z$ will be $\mathbf{B}^4\mathbb{Z}$ and the morphism $c$ will be the first Pontryagin class $p_1\colon \mathscr{B}SO(n)\to \mathbf{B}^4\mathbb{Z}$. the stack $X$ will be the homotopy fiber of $p_1$, and so the morphism $\psi$ will be the morphism
\begin{equation}
\rho_{p_1}\colon \mathbf{B}^3\mathbb{Z}/\!/_hSO(n) \to \mathscr{B}SO(n).
\end{equation}
of example \ref{charclass}. A lift $\sigma$ of an orientation on a manifold $M$ of dimension at most 3 to a morphism $M\to \mathbf{B}^3\mathbb{Z}/\!/_hSO(n)$ over $\mathscr{B}O(n)$ will be called a $p_1$-struture on $M$. That is, a pair $(M,\sigma)$ is the datum of a smooth oriented manifold $M$ together with a trivialisation of its first Pontryagin class. Note that, since $p_1$ is a degree four cohomology class, it can always be trivialised on manifolds of dimension at most 3. In particular, when $M$ is a closed connected oriented 3-manifold, we see that the space of lifts of the orientation of $M$ to a $p_1$ structure, is equivalent to the space $\mathbf{H}(M, \mathbf{B}^3\mathbb{Z})$ and so its set of connected components is
\begin{equation}
\pi_0\mathbf{H}(M,\mathbf{B}^3\mathbb{Z})=H^3(M,\mathbb{Z})\cong \mathbb{Z}.
\end{equation}
In other words, there is a  $\mathbb{Z}$-torsor of equivalence classes of $p_1$-strctures on a connected oriented 3-manifold. Similarly, in the relative case, i.e., when $M$ is a connected oriented 3-manifold with boundary, the set of equivalence classes of $p_1$-strctures on $M$ extending a given $p_1$-structure on $\partial M$ is nonempty and is a torsor for the
relative cohomology group
\begin{equation}
H^3(M,\partial M;\mathbb{Z})\cong \mathbb{Z},
\end{equation}
in perfect agreement with the prescription in \cite[page 480]{SegalCFT}.\footnote{The naturality of the appearance of this $\mathbb{Z}$-torsor here should be compared to Segal's words in \cite{SegalCFT}: ``An oriented 3-manifold $Y$ whose boundary $\partial Y$ is rigged has itself a set of riggings which form a principal homogeneous set under the group $\mathbb{Z}$ which is the centre of the central extension of $\mathrm{Diff}(\partial Y)$. I do not know an altogether straightforward way to define a rigging of a 3-manifold.'' Rigged 3-manifolds are then introduces by Segal in terms of the space of metrics on the 3-manifold $Y$ and of the $\eta$-invariant of these metrics.}\\
We can now combine the results of the previous section in the following
\begin{proposition}
Let $M$ be a connected oriented surface, and let $\sigma$ be a $p_{1}$-structure on $M$. We have then the following central extension
\begin{equation}
1\to \mathbb{Z}\to \Gamma^{p_1}(M,\sigma)\to \Gamma^{\mathrm{or}}(M) \to 1,
\end{equation}
where $\Gamma^{p_1}$ as a shorthand notation for $\Gamma^{\rho_{p_1}}$.
\end{proposition}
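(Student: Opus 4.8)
The plan is to obtain the statement as the specialisation of the short exact sequence (\ref{extension.h}) to the data $\tilde{\rho} = \iota_{SO(n)}$, $c = p_1$ of degree $d = 4$, coefficients $A = \mathbb{Z}$, and $\rho = \rho_{p_1}$ the homotopy fibre of $p_1$. With these choices one has $\Gamma^{\tilde{\rho}}(M,\tau) = \Gamma^{\mathrm{or}}(M)$ and $\Gamma^{\rho}(M,\sigma) = \Gamma^{p_1}(M,\sigma)$, and since $\dim M = 2 < 4$ the obstruction in $H^{4}(M;\mathbb{Z})$ to lifting the orientation to a $p_1$-structure vanishes, so that a lift $\sigma$ as posited in the statement indeed exists. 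Sequence (\ref{extension.h}) then reads
\[
1\to H^{2}(M;\mathbb{Z})\to \Gamma^{p_1}(M,\sigma)\to \Gamma^{\mathrm{or}}(M) \to H^{3}(M;\mathbb{Z}).
\]

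First I would compute the two outer cohomology groups for a closed connected oriented surface. Poincar\'e duality gives $H^{2}(M;\mathbb{Z}) \cong H_{0}(M;\mathbb{Z}) \cong \mathbb{Z}$, with generator $\mu$ the class dual to the fundamental class $[M]$, while $H^{3}(M;\mathbb{Z}) = 0$ because $M$ is $2$-dimensional. Substituting these, the rightmost term vanishes, so exactness at $\Gamma^{\mathrm{or}}(M)$ forces the map $\Gamma^{p_1}(M,\sigma)\to \Gamma^{\mathrm{or}}(M)$ to be surjective, and the sequence collapses to the short exact sequence
\[
1\to \mathbb{Z}\to \Gamma^{p_1}(M,\sigma)\to \Gamma^{\mathrm{or}}(M) \to 1.
\]

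The one step requiring genuine argument, and hence the main obstacle, is to upgrade this extension to a \emph{central} one. As emphasised after (\ref{extension.h}), the extension is a priori only an extension with a possibly nontrivial action of $\Gamma^{\mathrm{or}}(M)$ on the kernel, namely the pullback action of orientation-preserving diffeomorphisms on $H^{2}(M;\mathbb{Z}) \cong \mathbb{Z}$. I would show this action is trivial: for an orientation-preserving diffeomorphism $f$ one has $f_{*}[M] = [M]$, whence $\langle f^{*}\mu, [M]\rangle = \langle \mu, f_{*}[M]\rangle = \langle \mu, [M]\rangle$, and since $H^{2}(M;\mathbb{Z}) \cong \mathbb{Z}$ is detected by evaluation against $[M]$ this forces $f^{*}\mu = \mu$. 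Thus $\Gamma^{\mathrm{or}}(M)$ acts trivially on the kernel $\mathbb{Z}$, the extension is central, and the proof is complete.
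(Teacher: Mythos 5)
Your proposal is correct and follows essentially the same route as the paper: specialise the exact sequence (\ref{extension.h}) to $d=4$, $A=\mathbb{Z}$, use Poincar\'e duality to identify $H^2(M;\mathbb{Z})\cong\mathbb{Z}$ and the vanishing of $H^3(M;\mathbb{Z})$ to get a short exact sequence, then observe that orientation-preserving diffeomorphisms act trivially on $H^2(M;\mathbb{Z})$ to conclude centrality. You merely spell out in more detail the triviality of the action (via evaluation against the fundamental class), which the paper simply asserts.
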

\begin{proof}
Since $M$ is oriented, we have a canonical isomorphism $H^2(M,\mathbb{Z})\cong \mathbb{Z}$ induced by Poincar\'e duality. Moreover, since $M$ is connected, from \ref{extension.h} we obtaine the following short exact sequence
\begin{equation}
1\to \mathbb{Z}\to \Gamma^{\rho_{p_1}}(M,\sigma)\to \Gamma^{\mathrm{or}}(M) \to 1
\end{equation}
Finally, since the oriented diffeomorphisms action on $H^2(M,\mathbb{Z})$ is trivial for a connected oriented surface $M$, this short exact sequence is a $\mathbb{Z}$-central extension.
\end{proof}
%
%
￼

\section*{Appendix: proof of the extension theorem}
Here we provide the details for proof of the existence of the homotopy fibre sequence (\ref{diag1}), which is the extension theorem this note revolves around. All the notations in this Appendix are taken from Section \ref{section.extensions}.
\begin{lemmaapp}
We have a  homotopy pullback diagram
\begin{equation}
 \xymatrix{
      \mathrm{Diff}^\rho(M,\Sigma)
      \ar[r]
      \ar[d]_{\psi_*}
      &
      \mathbf{Aut}^\infty_{/\rho}(\sigma)
      \ar[d]^{\psi_*}\\
       \mathrm{Diff}^{\tilde{\rho}}(M,T)
      \ar[r]
      &
      \mathbf{Aut}^\infty_{/\tilde{\rho}}(\tau)
  }
\end{equation}
\end{lemmaapp}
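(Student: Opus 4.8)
The plan is to obtain this square as the top layer of a three-row tower of homotopy pullbacks, and to conclude by the pasting law. The two essential inputs are already available: the defining homotopy pullback \eqref{def.diff} applies verbatim to \emph{both} tangential structures. For $\rho$ it presents $\mathrm{Diff}^\rho(M,\Sigma)$ as the homotopy pullback of $\mathbf{Aut}^\infty_{/\rho}(\sigma)\to \mathbf{Aut}^\infty_{/\mathscr{B}GL(n)}(T^{\mathrm{st}}_M)\leftarrow \mathrm{Diff}(M)$, and for $\tilde{\rho}$ it presents $\mathrm{Diff}^{\tilde{\rho}}(M,T)$ as the homotopy pullback of $\mathbf{Aut}^\infty_{/\tilde{\rho}}(\tau)\to \mathbf{Aut}^\infty_{/\mathscr{B}GL(n)}(T^{\mathrm{st}}_M)\leftarrow \mathrm{Diff}(M)$. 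The decisive structural point is that these two squares share their entire bottom row: both the $\rho$-structure $\sigma$ and the $\tilde{\rho}$-structure $\tau$ lie over one and the same underlying stabilised tangent bundle $T^{\mathrm{st}}_M$, so the forgetful targets $\mathrm{Diff}(M)$ and $\mathbf{Aut}^\infty_{/\mathscr{B}GL(n)}(T^{\mathrm{st}}_M)$ literally coincide.

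Next I would stack the two squares, placing the $\tilde{\rho}$-instance of \eqref{def.diff} on the bottom and gluing the $\rho$-instance on top, with the connecting vertical maps given by the pushforward $\psi_*$:
\[
\xymatrix{
\mathrm{Diff}^\rho(M,\Sigma) \ar[r] \ar[d]_{\psi_*} & \mathbf{Aut}^\infty_{/\rho}(\sigma) \ar[d]^{\psi_*} \\
\mathrm{Diff}^{\tilde{\rho}}(M,T) \ar[r] \ar[d] & \mathbf{Aut}^\infty_{/\tilde{\rho}}(\tau) \ar[d] \\
\mathrm{Diff}(M) \ar[r] & \mathbf{Aut}^\infty_{/\mathscr{B}GL(n)}(T^{\mathrm{st}}_M)
}
\]
To read off the outer rectangle as the $\rho$-instance of \eqref{def.diff}, I must check that the two composite vertical maps agree with the forgetful maps appearing there. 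On the right this is the assertion that $\mathbf{Aut}^\infty_{/\rho}(\sigma)\xrightarrow{\psi_*}\mathbf{Aut}^\infty_{/\tilde{\rho}}(\tau)\to\mathbf{Aut}^\infty_{/\mathscr{B}GL(n)}(T^{\mathrm{st}}_M)$ equals the plain forgetful map, and on the left that $\mathrm{Diff}^\rho(M,\Sigma)\xrightarrow{\psi_*}\mathrm{Diff}^{\tilde{\rho}}(M,T)\to\mathrm{Diff}(M)$ equals the forgetful map $\mathscr{M}_k^\rho\to\mathscr{M}_k$. Both are instances of the same phenomenon: the base-change functor $\Psi_*$ on the double slice is post-composition with $\psi$, while the forgetful functor to $\mathbf{H}^\infty_{/\mathscr{B}GL(n)}$ only remembers the domain of an object (as an object over $\mathscr{B}GL(n)$), which post-composition leaves unchanged. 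This is exactly the functoriality homotopy pullback relating $\psi_*$ on $\mathscr{M}_k^{(-)}$ to $\Psi_*$ on the double slice, recorded in Section~\ref{rhoframed}.

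With these identifications in place I would invoke the pasting law for homotopy pullbacks: the bottom square is a homotopy pullback (it is \eqref{def.diff} for $\tilde{\rho}$), the outer rectangle is a homotopy pullback (it is \eqref{def.diff} for $\rho$), and therefore the top square is a homotopy pullback. Since the top square is precisely the asserted diagram, this completes the argument.

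The main obstacle is not the pasting law, which is purely formal, but the coherence in the middle step. One must produce, simultaneously and compatibly on the diffeomorphism side and on the internal-automorphism side, the homotopies witnessing that $\psi_*$ followed by the forgetful functor to $\mathbf{H}^\infty_{/\mathscr{B}GL(n)}$ is homotopic to the plain forgetful functor, so that the tower genuinely commutes up to coherent higher homotopy rather than merely on the level of objects. It is exactly the base-change functoriality of $\Psi_*$ on $(\mathbf{H}^\infty_{/\mathscr{B}GL(n)})_{/\rho}$ that supplies this coherence, and the bulk of a careful write-up would be spent making these compatibilities explicit rather than on the homotopy-pullback manipulations themselves.
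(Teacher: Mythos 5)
Your argument is exactly the paper's: both instances of the defining pullback \eqref{def.diff} share the bottom row $\mathrm{Diff}(M)\to\mathbf{Aut}^\infty_{/\mathscr{B}GL(n)}(T^{\mathrm{st}}_M)$, you stack them into a three-row tower with $\psi_*$ as the connecting maps, and conclude by the pasting (2-out-of-3) law for homotopy pullbacks. Your additional remarks on the coherence of the vertical composites are a reasonable elaboration of what the paper leaves implicit, but the proof is the same.
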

\begin{proof}
By definition of  (equation (\ref{def.diff})), we have homotopy pullback diagrams
\begin{equation}
    \xymatrix{
      \mathrm{Diff}^\rho(M,\Sigma)
      \ar[r]
      \ar[d]
      &
      \mathbf{Aut}^\infty_{/\rho}(\sigma)
      \ar[d]
      \\
      \mathrm{Diff}(M)
      \ar[r]
      &
      \mathbf{Aut}^\infty_{/\mathscr{B}GL(n)}(T^{\mathrm{st}}_M)
    }
 \end{equation}
and
\begin{equation}
    \xymatrix{
      \mathrm{Diff}^{\tilde{\rho}}(M,T)
      \ar[r]
      \ar[d]
      &
      \mathbf{Aut}^\infty_{/\tilde{\rho}}(\tau)
      \ar[d]
      \\
      \mathrm{Diff}(M)
      \ar[r]
      &
      \mathbf{Aut}^\infty_{/\mathscr{B}GL(n)}(T^{\mathrm{st}}_M)
    }
 \end{equation}
By pasting them together as
\begin{equation}
    \xymatrix{
      \mathrm{Diff}^\rho(M,\Sigma)
      \ar[r]
      \ar[d]_{\psi_*}
      &
      \mathbf{Aut}^\infty_{/\rho}(\sigma)
      \ar[d]^{\psi_*}\\
       \mathrm{Diff}^{\tilde{\rho}}(M,T)
      \ar[r]
      \ar[d]
      &
      \mathbf{Aut}^\infty_{/\tilde{\rho}}(\tau)
      \ar[d]
      \\
      \mathrm{Diff}(M)
      \ar[r]
      &
      \mathbf{Aut}^\infty_{/\mathscr{B}GL(n)}(T^{\mathrm{st}}_M)
    }
 \end{equation}
and by the 2-out-of-3 law for homotopy pullbacks the claim follows.
\end{proof}
We need the following basic fact \cite[Lemma 5.5.5.12]{lurieHTT}:
\begin{lemmaapp}\label{lurieLemma}
  Let $\mathbf{C}$ be an $\infty$-category, $\mathbf{C}_{/x}$ its slice over
  an object $x \in \mathbf{C}$, and let $f \colon a \to x$ and $g \colon b \to x$
  be two morphisms into $x$. Then
  the hom space $\mathbf{C}_{/x}(f,g)$ in the slice
  is expressed in terms of that in $\mathbf{C}$ by the fact that there
  is a homotopy pullback (in $\infty \mathrm{Grpd}$) of the form
  $$
    \xymatrix{
       \mathbf{C}_{/x}(f,g)
       \ar[r]
       \ar[d]
       &
       \mathbf{C}(a,b)
       \ar[d]^{g \circ (-)}
       \\
       \ast
       \ar[r]^{[f]}
       &
       \mathbf{C}(a,x)
    }
  $$
  where the right morphism is composition with $g$, and where
  the bottom morphism picks $f$ regarded as a point in $\mathbf{C}(a,x)$.
\end{lemmaapp}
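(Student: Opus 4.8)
The plan is to deduce the lemma from the fact that the slice projection $\pi \colon \mathbf{C}_{/x} \to \mathbf{C}$ is a right fibration, together with the general formula for mapping spaces in the total space of a right fibration. First I would recall that for any object $x \in \mathbf{C}$ the forgetful functor $\pi \colon \mathbf{C}_{/x} \to \mathbf{C}$ is a right fibration \cite[2.1.2.2]{lurieHTT}, and that under straightening it is classified by the representable presheaf $\mathbf{C}(-,x) \colon \mathbf{C}^{\mathrm{op}} \to \infty\mathrm{Grpd}$. In particular the fiber $\pi^{-1}(c)$ is canonically the mapping space $\mathbf{C}(c,x)$, and the two objects $f \colon a \to x$ and $g \colon b \to x$ correspond to points $f \in \mathbf{C}(a,x)$ and $g \in \mathbf{C}(b,x)$ of the fibers over $a$ and $b$ respectively.

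Next I would invoke the standard description of hom-spaces for a right fibration $p \colon E \to \mathbf{C}$ with objects $e, e'$ lying over $a, b$. Applying $p$ gives a map $E(e,e') \to \mathbf{C}(a,b)$ whose fiber over a morphism $h \colon a \to b$ is the space of lifts of $h$ to a morphism $e \to e'$ in $E$. Since $p$ is a right fibration there is a (homotopy) cartesian lift $h^{*}e' \to e'$ covering $h$, and factoring through it identifies this fiber with the space $\mathrm{Map}_{E_a}(e, h^{*}e')$ computed in the $\infty$-groupoid fiber $E_a$, i.e.\ with the path space $\{e\} \times_{E_a} \{h^{*}e'\}$. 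Specializing to $E = \mathbf{C}_{/x}$, where $E_a = \mathbf{C}(a,x)$ and the contravariant transport $h^{*}$ is precomposition, the object $h^{*}g$ is simply the composite $g \circ h$. Hence the fiber over $h$ is $\{f\} \times_{\mathbf{C}(a,x)} \{g \circ h\}$, and assembling over all $h \in \mathbf{C}(a,b)$ produces exactly the claimed homotopy pullback, with right vertical map $g \circ (-)$ and bottom map picking out $f$.

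I expect the main obstacle to be making these two inputs precise and model-independent: the canonical identification of the fiber $\pi^{-1}(a)$ with $\mathbf{C}(a,x)$ and of cartesian transport with composition, which is exactly the assertion that $\pi$ is classified by the representable presheaf. The most economical route is to quote the straightening--unstraightening equivalence between right fibrations over $\mathbf{C}$ and $\infty\mathrm{Grpd}$-valued presheaves, after which the pullback description is formal. Alternatively, one can argue directly in the quasicategorical model, where $\mathbf{C}_{/x}$ is defined through the join so that its $n$-simplices are $(n+1)$-simplices of $\mathbf{C}$ with terminal vertex $x$; one then realizes the square as a square of right-pinched mapping complexes $\mathrm{Hom}^{R}$ and checks homotopy cartesianness by verifying that the comparison maps are Kan fibrations of Kan complexes. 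Since this is precisely \cite[Lemma 5.5.5.12]{lurieHTT}, for the purposes of this note I would regard the detailed simplicial bookkeeping as supplied there and simply cite it.
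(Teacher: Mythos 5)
The paper does not actually prove this statement: it is quoted verbatim as a ``basic fact'' with a citation to \cite[Lemma 5.5.5.12]{lurieHTT}, so your closing move of deferring the simplicial bookkeeping to that reference already puts you in agreement with the authors. What you add on top of that is a correct conceptual proof sketch, and it is essentially the standard argument underlying Lurie's lemma: the projection $\mathbf{C}_{/x}\to\mathbf{C}$ is a right fibration classified by the representable presheaf $\mathbf{C}(-,x)$, and for any right fibration $p\colon E\to\mathbf{C}$ with $e,e'$ over $a,b$ one has $E(e,e')\simeq \mathbf{C}(a,b)\times_{E_a}\{e\}$, the map $\mathbf{C}(a,b)\to E_a$ being $h\mapsto h^{*}e'$; specializing the contravariant transport to precomposition gives exactly the square in the statement. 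The only substantive inputs you are implicitly using --- that the slice projection is a right fibration, that its straightening is the representable presheaf (so that the fiber over $a$ \emph{is} $\mathbf{C}(a,x)$ and $h^{*}g\simeq g\circ h$), and the fiberwise description of mapping spaces in a right fibration --- are themselves theorems of \cite{lurieHTT}, so your argument is not more elementary than the citation, but it does explain \emph{why} the formula holds rather than merely invoking it. Either route is acceptable here; yours buys some insight at the cost of quoting two or three auxiliary results instead of one.
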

\begin{lemmaapp}
We have homotopy pullback diagrams
\begin{equation}
\raisebox{20pt}{
\xymatrix{
\Omega_{T}\mathbf{H}^\infty_{/\mathscr{B}GL(n)}(T^{\mathrm{st}}_M,\tilde{\rho})
      \ar[d]
      \ar[r]
   &
      \mathbf{Aut}^{\infty}_{/\tilde{\rho}}(T)\ar[d]\\
{*}\ar[r]& \mathbf{Aut}^\infty_{/\mathscr{B}GL(n)}(T^{\mathrm{st}}_M)
}}\qquad \text{ and }\qquad
\raisebox{20pt}{
\xymatrix{
\Omega_{\Sigma}\mathbf{H}^\infty_{/\mathscr{B}GL(n)}(T^{\mathrm{st}}_M,\rho)
      \ar[d]
      \ar[r]
   &
      \mathbf{Aut}^{\infty}_{\rho}(\Sigma)\ar[d]\\
{*}\ar[r]& \mathbf{Aut}^\infty_{/\mathscr{B}GL(n)}(T^{\mathrm{st}}_M)
}}
\end{equation}
\end{lemmaapp}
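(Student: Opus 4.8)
The plan is to derive both homotopy pullback squares as instances of the slice hom-space formula of Lemma~A.\ref{lurieLemma}, applied inside the $\infty$-category $\mathbf{C}:=\mathbf{H}^\infty_{/\mathscr{B}GL(n)}$. I will treat the left square in detail; the right one follows from the identical argument upon replacing the pair $(\tilde{\rho},T)$ by $(\rho,\Sigma)$. Here $T^{\mathrm{st}}_M$ and $\tilde{\rho}$ are objects of $\mathbf{C}$, and the $\tilde{\rho}$-structure $T$ is a morphism $T\colon T^{\mathrm{st}}_M\to\tilde{\rho}$ in $\mathbf{C}$.

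First I would apply Lemma~A.\ref{lurieLemma} with $x=\tilde{\rho}$ and $f=g=T$, which yields the homotopy pullback (in $\infty\mathrm{Grpd}$)
\begin{equation}
\xymatrix{
\mathbf{C}_{/\tilde{\rho}}(T,T)\ar[r]^-{U}\ar[d] & \mathbf{C}(T^{\mathrm{st}}_M,T^{\mathrm{st}}_M)\ar[d]^-{q}\\
{*}\ar[r]^-{[T]} & \mathbf{C}(T^{\mathrm{st}}_M,\tilde{\rho})
}
\end{equation}
where $U$ is the forgetful map on hom-spaces and $q:=T\circ(-)$. This identifies $\mathbf{C}_{/\tilde{\rho}}(T,T)$ with the homotopy fibre of $q$ over $[T]$. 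Next I would pass to automorphism components. Since the projection $\mathbf{C}_{/\tilde{\rho}}\to\mathbf{C}$ reflects equivalences, $\mathbf{Aut}^\infty_{/\tilde{\rho}}(T)$ is precisely the union of components of $\mathbf{C}_{/\tilde{\rho}}(T,T)$ lying over the invertible components $\mathbf{Aut}^\infty_{/\mathscr{B}GL(n)}(T^{\mathrm{st}}_M)\subseteq\mathbf{C}(T^{\mathrm{st}}_M,T^{\mathrm{st}}_M)$; that is, $\mathbf{Aut}^\infty_{/\tilde{\rho}}(T)=U^{-1}\big(\mathbf{Aut}^\infty_{/\mathscr{B}GL(n)}(T^{\mathrm{st}}_M)\big)$, and the right vertical map of the claimed square is the restriction $U|_{\mathbf{Aut}}$. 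Because a homotopy fibre over a point only sees the component of the basepoint and its preimage, the homotopy fibre of $U|_{\mathbf{Aut}}$ over $\mathrm{id}_{T^{\mathrm{st}}_M}$ agrees with that of $U$ over $\mathrm{id}_{T^{\mathrm{st}}_M}$. It therefore suffices to prove that the claimed square is a homotopy pullback, which amounts to the equivalence $\mathrm{hofib}_{\mathrm{id}}(U)\simeq\Omega_T\mathbf{H}^\infty_{/\mathscr{B}GL(n)}(T^{\mathrm{st}}_M,\tilde{\rho})$.

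For this last step I would invoke the pasting law for homotopy pullbacks: pasting the defining pullback of $\mathrm{hofib}_{\mathrm{id}}(U)$ along $\mathrm{id}\colon {*}\to\mathbf{C}(T^{\mathrm{st}}_M,T^{\mathrm{st}}_M)$ onto the square above, the outer rectangle becomes the homotopy pullback of two copies of ${*}$ along a single map ${*}\to\mathbf{C}(T^{\mathrm{st}}_M,\tilde{\rho})$. The crucial point is that $q(\mathrm{id})=T\circ\mathrm{id}=T=[T]$, so both legs select the same point; the homotopy pullback of a point with itself over a point of a space $S$ is by definition the based loop space, whence $\mathrm{hofib}_{\mathrm{id}}(U)\simeq\Omega_T\mathbf{H}^\infty_{/\mathscr{B}GL(n)}(T^{\mathrm{st}}_M,\tilde{\rho})$, as required. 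Since the claimed bottom map ${*}\to\mathbf{Aut}^\infty_{/\mathscr{B}GL(n)}(T^{\mathrm{st}}_M)$ picks exactly this basepoint $\mathrm{id}$, the square is a homotopy pullback by the very definition of homotopy fibre.

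I expect the only genuinely delicate point to be the bookkeeping of automorphism components: one must verify that restricting the Lemma~A.\ref{lurieLemma} pullback to the invertible components on the right pulls back to precisely $\mathbf{Aut}^\infty_{/\tilde{\rho}}(T)$ on the top-left --- this is exactly where one uses that the slice projection reflects equivalences --- and that this restriction leaves the homotopy fibre over $\mathrm{id}$ unchanged. Everything else is a formal consequence of the pasting law and of the identification of a \emph{point-over-point} homotopy pullback with a loop space.
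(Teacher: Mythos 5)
Your proof is correct and follows essentially the same route as the paper's: both apply Lemma~A.2 (Lurie's slice hom-space formula) with $\mathbf{C}=\mathbf{H}^\infty_{/\mathscr{B}GL(n)}$ and then use the pasting law together with the observation that $T\circ\mathrm{id}=T$ to identify the resulting fibre with the based loop space $\Omega_T\mathbf{H}^\infty_{/\mathscr{B}GL(n)}(T^{\mathrm{st}}_M,\tilde{\rho})$. The only difference is that you spell out the passage from the full hom-space $\mathbf{C}_{/\tilde{\rho}}(T,T)$ to the invertible components $\mathbf{Aut}^\infty_{/\tilde{\rho}}(T)$ via conservativity of the slice projection, a point the paper leaves implicit.
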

\begin{proof}
Let $\mathbf{C}$ be an $(\infty,1)$-category, and let $f\colon x\to y$ be a morphism in $\mathbf{C}$. Then
by Lemma A.\ref{lurieLemma} and using 2-out-of-3 for homotopy pullbacks,
the forgetful morphism $\mathbf{C}_{/y}\to \mathbf{C}$ from the slice over $y$ to $\mathbf{C}$ induces a morphism of $\infty$-groups $\mathbf{Aut}_{\mathbf{C}_{/y}}(f)\to \mathbf{Aut}_{\mathbf{C}}(x)$ sitting in a pasting of homotopy pullbacks
like this:
\begin{equation}
\xymatrix{
  \Omega_f\mathbf{C}(x,y)\ar[r]\ar[d]
  &
  \mathbf{Aut}_{\mathbf{C}_{/y}}(f)\ar[d]
  \ar[r]
  &
  \ast
  \ar[d]^-{[f]}
  \\
  {*}\ar[r]^-{[\mathrm{id}]}
  \ar@/_1pc/[rr]_-{[f]}
  &
  \mathbf{Aut}_{\mathbf{C}}(x)
  \ar[r]^{f\circ (-)}
  &
  \mathbf{C}(x,y)
}
\end{equation}
By taking here $\mathbf{C}=\mathbf{H}^\infty_{/\mathscr{B}GL(n)}$, $x=T^{\mathrm{st}}_M$, $y=\tilde{\rho}$ (resp., $y=\rho$), and $f=T$ (resp., $f=\Sigma$), the left square yields the first (resp., the second) diagram in the statement of the lemma.
\end{proof}

\begin{lemmaapp}
We have a homotopy pullback diagram
\begin{equation}
    \xymatrix{
     \Omega_{\beta}(\mathbf{H}^\infty_{/\mathscr{B}GL(n)})_{/\tilde{\rho}}(T,\Psi)\ar[r] \ar[d]&
      \Omega_{\Sigma}\mathbf{H}^\infty_{/\mathscr{B}GL(n)}(T^{\mathrm{st}}_M,\rho)
            \ar[d]
            \\
   {*}\ar[r] &  \Omega_{T}\mathbf{H}^\infty_{/\mathscr{B}GL(n)}(T^{\mathrm{st}}_M,\tilde{\rho})
      }
\end{equation}
      \end{lemmaapp}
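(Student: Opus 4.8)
The plan is to recognize the asserted square as the image, under the based loop space functor, of the homotopy pullback furnished by Lemma~A.\ref{lurieLemma}. First I would apply that lemma to the $\infty$-category $\mathbf{C}=\mathbf{H}^\infty_{/\mathscr{B}GL(n)}$ with $x=\tilde{\rho}$, $f=T\colon T^{\mathrm{st}}_M\to\tilde{\rho}$ and $g=\Psi\colon \rho\to\tilde{\rho}$. Since the source of $f$ is $T^{\mathrm{st}}_M$ and the source of $g$ is $\rho$, this produces the homotopy pullback
\begin{equation}
\xymatrix{
(\mathbf{H}^\infty_{/\mathscr{B}GL(n)})_{/\tilde{\rho}}(T,\Psi) \ar[r] \ar[d] & \mathbf{H}^\infty_{/\mathscr{B}GL(n)}(T^{\mathrm{st}}_M,\rho) \ar[d]^{\psi_*} \\
{*} \ar[r]^-{[T]} & \mathbf{H}^\infty_{/\mathscr{B}GL(n)}(T^{\mathrm{st}}_M,\tilde{\rho})
}
\end{equation}
in which the right vertical map is post-composition with $\Psi$, that is the morphism $\psi_*$, and the bottom map selects the $\tilde{\rho}$-structure $T$.

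Next I would identify the relevant basepoints. An object of the double-slice hom-space $(\mathbf{H}^\infty_{/\mathscr{B}GL(n)})_{/\tilde{\rho}}(T,\Psi)$ is precisely a morphism $\Sigma\colon T^{\mathrm{st}}_M\to\rho$ in the slice together with a homotopy exhibiting $\psi_*\Sigma\simeq T$; this is exactly the datum of a lift of $T$ through $\psi$, and the distinguished such lift is our basepoint $\beta$. Under the top horizontal map of the square, $\beta$ is carried to the $\rho$-structure $\Sigma$, which $\psi_*$ in turn sends to $T$ by the very homotopy recorded in $\beta$. Thus the four basepoints $\beta$, $\Sigma$, the point of ${*}$, and $T$ are mutually compatible, so the square is a pointed homotopy pullback.

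Finally I would apply the based loop space functor $\Omega$ to the whole square. Since $\Omega_z W\simeq {*}\times_W{*}$ is itself a homotopy pullback and finite homotopy limits commute with one another (equivalently, $\Omega$ is a right adjoint on pointed objects), $\Omega$ sends homotopy pullbacks to homotopy pullbacks. Applying it to the square above, and using $\Omega({*})\simeq{*}$, yields exactly the claimed homotopy pullback with corners $\Omega_\beta(\mathbf{H}^\infty_{/\mathscr{B}GL(n)})_{/\tilde{\rho}}(T,\Psi)$, $\Omega_\Sigma\mathbf{H}^\infty_{/\mathscr{B}GL(n)}(T^{\mathrm{st}}_M,\rho)$, ${*}$, and $\Omega_T\mathbf{H}^\infty_{/\mathscr{B}GL(n)}(T^{\mathrm{st}}_M,\tilde{\rho})$. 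The step requiring the most care is the basepoint bookkeeping: one must verify that the homotopy $\beta$ is precisely the coherence datum making the chosen points of the four corners agree, so that looping at these points reproduces the named loop spaces rather than loop spaces taken at some other, a priori inequivalent, basepoints.
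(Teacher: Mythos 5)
Your proof is correct and is essentially identical to the paper's: both apply Lemma~A.2 with $\mathbf{C}=\mathbf{H}^\infty_{/\mathscr{B}GL(n)}$, $a=T^{\mathrm{st}}_M$, $b=\rho$, $x=\tilde{\rho}$, $f=T$, $g=(\psi,\Psi)$ to obtain the unlooped fibre sequence, and then loop it. Your extra attention to the compatibility of the basepoints $\beta$, $\Sigma$, and $T$ is a welcome elaboration of the paper's terse ``by looping the above diagram, the claim follows,'' but it is the same argument.
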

\begin{proof}
If we take $\mathbf{C}=\mathbf{H}^\infty_{/\mathscr{B}GL(n)}$, $g=(\psi,\Psi)$, $a=T^{\mathrm{st}}_M$, $f=T$, $b=\rho$ and $x=\tilde{\rho}$ in Lemma A.\ref{lurieLemma}, we find the homotopy fibre sequence
\begin{equation}
\xymatrix{
(\mathbf{H}^\infty_{/\mathscr{B}GL(n)})_{/\tilde{\rho}}(T,\Psi)\ar[d]\ar[r]&
 \mathbf{H}^\infty_{/\mathscr{B}GL(n)}(T^{\mathrm{st}},\rho)\ar[d]^{\psi_*}\\
 {*}\ar[r] & \mathbf{H}^\infty_{/\mathscr{B}GL(n)}(T^{\mathrm{st}},\tilde{\rho})}
\end{equation}
By looping the above diagram, the claim follows.
\end{proof}
\begin{lemmaapp}
We have an equivalence of $(\infty,1)$-categories
\begin{equation}
(\mathbf{H}^\infty_{/\mathscr{B}GL(n)})_{/\tilde{\rho}}\cong \mathbf{H}^\infty_{/Y}.
\end{equation}
\end{lemmaapp}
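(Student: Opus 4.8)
The plan is to recognise this as a completely general and formal fact about iterated slicing in any $\infty$-category, with no input from the specific geometry: for an $\infty$-category $\mathbf{C}$, an object $x$, and an object $f\colon y\to x$ of the slice $\mathbf{C}_{/x}$, there is a canonical equivalence $(\mathbf{C}_{/x})_{/f}\simeq \mathbf{C}_{/y}$. The statement of the lemma is precisely the instance $\mathbf{C}=\mathbf{H}^\infty$, $x=\mathscr{B}GL(n)$ and $f=\tilde{\rho}\colon Y\to\mathscr{B}GL(n)$, so once the general equivalence is established there is nothing further to check. Conceptually this is exactly the ``unwinding'' already used in Section \ref{rhoframed}: an object of $(\mathbf{H}^\infty_{/\mathscr{B}GL(n)})_{/\tilde{\rho}}$ is a triangle one of whose edges is the fixed object $\tilde{\rho}$, and such a triangle carries no more information than its remaining edge $Z\to Y$, i.e. an object of $\mathbf{H}^\infty_{/Y}$; the content of the lemma is that this identification is coherent at all simplicial levels.

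First I would make this precise via the defining universal property of over-categories (\cite{lurieHTT}, \S1.2.9): for any simplicial set $W$, maps $W\to\mathbf{H}^\infty_{/Y}$ correspond naturally to maps $W\star\Delta^0\to\mathbf{H}^\infty$ sending the terminal cone point to $y$. Applying this description twice to the iterated slice, and using associativity of the join in the form $(W\star\Delta^0)\star\Delta^0\cong W\star\Delta^1$, maps $W\to(\mathbf{H}^\infty_{/\mathscr{B}GL(n)})_{/\tilde{\rho}}$ correspond naturally to maps $W\star\Delta^1\to\mathbf{H}^\infty$ whose restriction to $\Delta^1$ is the edge $\tilde{\rho}\colon y\to x$ underlying our object, with terminal vertex $x=\mathscr{B}GL(n)$. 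Under these two descriptions the evident forgetful functor $(\mathbf{H}^\infty_{/\mathscr{B}GL(n)})_{/\tilde{\rho}}\to\mathbf{H}^\infty_{/Y}$ is induced by restriction along the inclusion $W\star\{0\}\hookrightarrow W\star\Delta^1$ of the initial vertex of that edge; geometrically it forgets the lift to $\mathscr{B}GL(n)$ and keeps only the morphism to $Y$.

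The main point to verify, and the one genuine obstacle, is that this restriction is an equivalence. I would deduce it from the fact that the inclusion
\[
\big(W\star\{0\}\big)\cup_{\{0\}}\Delta^1\ \hookrightarrow\ W\star\Delta^1
\]
is a categorical equivalence; indeed it is the join--pushout of $\emptyset\to W$ with the inclusion $\{0\}\hookrightarrow\Delta^1$ of the initial vertex, and is anodyne by the standard join estimates of \cite[\S2.1.2]{lurieHTT}. The prototypical case $W=\Delta^0$ is the inner horn inclusion $\Lambda^2_1\hookrightarrow\Delta^2$, i.e. precisely ``composition with $f$''. Since $\mathbf{H}^\infty$ is an $\infty$-category, restriction along such an inclusion induces a trivial Kan fibration on functor spaces; passing to the fibre over the fixed diagram $\tilde{\rho}$ on $\Delta^1$ then identifies the two mapping spaces above compatibly in $W$. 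By (co-)Yoneda this exhibits the desired equivalence $(\mathbf{H}^\infty_{/\mathscr{B}GL(n)})_{/\tilde{\rho}}\cong\mathbf{H}^\infty_{/Y}$. Alternatively, one may avoid the join bookkeeping altogether and instead construct the forgetful functor directly, checking full faithfulness from the hom-space formula of Lemma A.\ref{lurieLemma} (both sides reduce to the same homotopy pullback over $\mathbf{H}^\infty(Z,Y)$) together with essential surjectivity, which is immediate from the unwinding described above.
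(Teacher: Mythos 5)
Your argument is correct, and it reduces the lemma to exactly the same completely general statement $(\mathbf{C}_{/x})_{/f}\simeq\mathbf{C}_{/b}$ for a morphism $f\colon b\to x$ that the paper's Appendix proof uses; the difference is which piece of \cite{lurieHTT} does the work. The paper regards $\tilde{\rho}$ as a diagram $\Delta^1\to\mathbf{H}^\infty$, identifies the double slice with the overcategory of that diagram, and invokes the cofinality criterion (HTT, Prop.\ 4.1.1.8) for the initial-vertex inclusion $\Delta^0\hookrightarrow\Delta^1$, whose opposite is cofinal, to conclude that restriction to the source is an equivalence. You instead run the combinatorial argument underlying that criterion: join associativity plus the pushout-join estimate showing that $(W\star\{0\})\cup_{\{0\}}\Delta^1\hookrightarrow W\star\Delta^1$ is inner anodyne (your identification of the case $W=\Delta^0$ with $\Lambda^2_1\hookrightarrow\Delta^2$ is exactly right, and the general case does follow from the join lemmas of HTT \S 2.1.2 because $\{0\}\hookrightarrow\Delta^1$ is left anodyne). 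Your route is more elementary and in fact yields slightly more --- the forgetful functor is a trivial Kan fibration, not merely an equivalence --- while the paper's route is shorter granted the cofinality machinery. The one step I would tighten is the closing appeal to ``(co-)Yoneda'': a levelwise weak equivalence of mapping spaces natural in $W$ does not formally produce an equivalence of simplicial sets. The clean finish is to feed your anodyne inclusion, taken for $W=\Delta^n$ relative to $W=\partial\Delta^n$, directly into the lifting problem for $(\mathbf{H}^\infty_{/\mathscr{B}GL(n)})_{/\tilde{\rho}}\to\mathbf{H}^\infty_{/Y}$ against $\partial\Delta^n\hookrightarrow\Delta^n$, which exhibits that functor as a trivial fibration outright; your alternative ending via the hom-space formula of Lemma A.\ref{lurieLemma} together with essential surjectivity also works.
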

\begin{proof}
Let $\mathbf{C}$ be an $(\infty,1)$-category, and let $f:b\to{x}$ be a 1-morphism in $\mathbf{C}$. By abuse of notation, we can regard $f$ as a diagram $f:\Delta^{1}\to\mathbf{C}$. We have then a morphism
\begin{equation}
\varphi:(\mathbf{C}_{/x})_{/f} \to \mathbf{C}_{/b}
\end{equation}
induced by the $\infty$-functor $\Delta^{0}\hookrightarrow\Delta^{1}$ induced by sending 0 to 1. Since 1 is an initial object in $\Delta^{1}$, the opposite $\infty$-functor is a cofinal map. By noticing that $\mathbf{C}^{op}_{x/}$ is canonically equivalent to $\mathbf{C}_{/x}$, then by \cite[Proposition 4.1.1.8]{lurieHTT} we have that $\varphi$ is an equivalence of $\infty$-categories.
Therefore, if we take $\mathbf{C}=\mathbf{H}^\infty$, and $f=\tilde{\rho}:Y\to\mathscr{B}GL(n)$, we have that the claim follows.
\end{proof}


\end{document}